\documentclass{amsart}      
\usepackage{amssymb,amsthm, amsmath, amsfonts} 
\usepackage{graphics}                 
\usepackage{hyperref}                 
\usepackage[all]{xy}
\usepackage{enumerate}

\newtheorem{theorem}{Theorem}[section]
\newtheorem{lemma}[theorem]{Lemma}
\newtheorem{proposition}[theorem]{Proposition}

\newtheorem{corollary}[theorem]{Corollary}
\newtheorem{definition}[theorem]{Definition}
\newtheorem{example}[theorem]{Example}
\newtheorem{remark}[theorem]{Remark}
\numberwithin{equation}{section}

\DeclareMathOperator{\op}{op}
\DeclareMathOperator{\pd}{pd}
\DeclareMathOperator{\Hom}{Hom}
\DeclareMathOperator{\End}{End}
\DeclareMathOperator{\Ext}{Ext}
\DeclareMathOperator{\module}{-mod}
\DeclareMathOperator{\Module}{-Mod}
\DeclareMathOperator{\add}{add}
\DeclareMathOperator{\Tria}{Tria}
\DeclareMathOperator{\tr}{tr}

\title[Triangular matrix algebras]{Derived equivalences between triangular matrix algebras}
\author{Liping Li}
\address{College of Mathematics and Computer Sciences, Hunan Normal University, Changsha, 410081, China.}
\email{lipingli@hunnu.edu.cn; lixxx480@umn.edu.}
\begin{document}

\begin{abstract}
In this paper we study derived equivalences between triangular matrix algebras using certain classical recollements. We show that special properties of these recollements actually characterize triangular matrix algebras, and describe methods to construct tilting modules and tilting complexes inducing derived equivalences between them.
\end{abstract}

\maketitle

\section{Introduction}

Throughout this paper let $A = (B, C, M)$ be a (finite dimensional) triangular matrix algebra defined by gluing two nonzero algebras $B$ and $C$ via a $(C, B)$-bimodule $M$. Our main goal is to establish various derived equivalences between triangular matrix algebras, unifying a few results described in \cite{APR, AH, Barot, Chen, Ladkani, Lin, Maycock}. By Rickard's theorem (\cite{Rickard}), two algebras are derived equivalent if and only if one algebra is isomorphic to the opposite endomorphism algebra of a certain tilting complex in the bounded derived category of the other algebra, so we focus on constructing tilting objects.

APR tilting modules were introduced in \cite{APR} for one-point extensions and one-point coextensions. Since they are special examples of triangular matrix algebras, one may wonder whether analogous tilting modules can be defined for certain triangular matrix algebras. In \cite{Li} the author gave a sufficient condition such that $T = Ae_B \oplus \tau^{-1} _A (Ae_C)$ is tilting, where $\tau_A$ is the Auslander-Reiten translation, and $e_B$ (resp., $e_C)$ is the unit in $B$ (resp., in $C$). This is called a \textit{generalized APR tilting module}. Although $T$ does induce a derived equivalence between $A$ and $\End (T) ^{\op}$, it is not guaranteed that $\End (T) ^{\op}$ is a triangular matrix algebra as well. However, we obtain an easy-to-check criteria in this paper:

\begin{theorem}
Let $A = (B, C, M)$ and suppose that $T = Ae_B \oplus \tau^{-1} _A (Ae_C)$ is a generalized APR tilting module. Then $\End_A (T) ^{\op}$ is a triangular matrix algebra glued by $B$ and $\End_A (\tau _A^{-1} (Ae_C)) ^{\op}$ if and only if $M_B$ is projective. In this case, $T$ induces a derived equivalence between $A$ and
\begin{equation*}
E = \End_A (Ae_B \oplus \tau _A^{-1} (Ae_C)) ^{\op} = \begin{bmatrix} \End_A (\tau _A^{-1} (C)) ^{\op} & 0 \\ \Hom_A (Ae_B, \tau _A^{-1} (C)) &  B\end{bmatrix}.
\end{equation*}
\end{theorem}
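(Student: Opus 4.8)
The plan is first to put $E:=\End_A(T)^{\op}$ into triangular form relative to the decomposition $T=T_1\oplus T_2$ with $T_1:=Ae_B$ and $T_2:=\tau_A^{-1}(Ae_C)$, and then to reduce the whole assertion to a single $\Hom$-vanishing. If $\varepsilon_1,\varepsilon_2\in E$ are the orthogonal idempotents cutting out $T_1$ and $T_2$, then $\varepsilon_iE\varepsilon_j\cong\Hom_A(T_i,T_j)$, so, ordering the $T_2$-component first,
\[
E\;\cong\;\begin{bmatrix}\End_A(\tau_A^{-1}(Ae_C))^{\op}&\Hom_A(\tau_A^{-1}(Ae_C),Ae_B)\\[2pt]\Hom_A(Ae_B,\tau_A^{-1}(Ae_C))&\End_A(Ae_B)^{\op}\end{bmatrix},
\]
and, $Ae_B$ being projective, $\End_A(Ae_B)^{\op}\cong e_BAe_B=B$ and $\Hom_A(Ae_B,\tau_A^{-1}(Ae_C))\cong e_B\,\tau_A^{-1}(Ae_C)$. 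Hence $E$ is exactly the triangular matrix algebra in the statement once the corner $\Hom_A(\tau_A^{-1}(Ae_C),Ae_B)$ vanishes; conversely, for $E$ to be any triangular matrix algebra glued from $B$ and $\End_A(\tau_A^{-1}(Ae_C))^{\op}$, one of its off-diagonal corners must be zero, and the corner $\Hom_A(Ae_B,\tau_A^{-1}(Ae_C))=e_B\,\tau_A^{-1}(Ae_C)$ cannot vanish once $M\neq0$: its vanishing would force $\tau_A^{-1}(Ae_C)$ to be inflated along $A\twoheadrightarrow A/Ae_BA=C$, and $\tau_A$ annihilates such a module only if it is projective, contradicting that $T$, being a tilting module, has $\tau_A^{-1}(Ae_C)$ non-projective with the right number of pairwise non-isomorphic indecomposable summands. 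So everything reduces to the equivalence
\[
\Hom_A(\tau_A^{-1}(Ae_C),Ae_B)=0\iff M_B\text{ is projective.}
\]

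For this I would compute $\tau_A^{-1}(Ae_C)$ from a minimal injective copresentation $0\to Ae_C\to I^0\to I^1$ of $Ae_C$, whose terms are read off from the recollement attached to $e_C$ (so $A/Ae_CA=B$ and $e_CAe_C=C$): the injective hull is $I^0=j_*(I_C({}_CC))$, where $j_*=\Hom_C(e_CA,-)$ and $I_C({}_CC)$ is the injective hull of the regular left $C$-module, and the injective hull $I^1$ of $I^0/Ae_C$ has its indecomposable summands with socle at a $B$-vertex equal to $i_*(I_B(\Hom_C(M,{}_CC)))$, where $i_*$ is inflation along $A\twoheadrightarrow B$. Applying the inverse Nakayama functor $\nu^{-1}=\Hom_A(DA,-)$ produces a projective presentation $\nu^{-1}I^0\xrightarrow{\bar d}\nu^{-1}I^1\to\tau_A^{-1}(Ae_C)\to0$ in which $\nu^{-1}I^0$ is a sum of projectives $Af$ ($f$ a primitive idempotent of $C$), while the $B$-part of $\nu^{-1}I^1$ is $\nu^{-1}(i_*(I_B(\Hom_C(M,{}_CC))))\cong\Hom_B(D(Ae_B),I_B(\Hom_C(M,{}_CC)))$, using the identity $\nu^{-1}\circ i_*\cong\Hom_B(D(Ae_B),-)$ on injective $B$-modules. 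Now apply $\Hom_A(-,Ae_B)$ to the presentation: since $T$ is tilting, $\Ext^1_A(\tau_A^{-1}(Ae_C),Ae_B)=0$, so $\Hom_A(\tau_A^{-1}(Ae_C),Ae_B)=\ker\bar d^{\,*}$ is determined, by a short dimension count, from $\Hom_A(\nu^{-1}I^1,Ae_B)$, $\Hom_A(\nu^{-1}I^0,Ae_B)$ and $\Hom_A(\nu^{-1}(Ae_C),Ae_B)$; feeding in $\Hom_A(Af,Ae_B)=fM$, $\Hom_A(Ae,Ae_B)=eB$, the description of $\nu^{-1}I^1$ above, and $\pd_A\tau_A^{-1}(Ae_C)\le1$ (from the generalized APR hypothesis), the outcome depends only on whether the left $B$-module $D(Ae_B)$ — whose restriction to $B$ is $DB\oplus D(M_B)$ — is injective, i.e. only on whether $M_B$ is projective. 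This yields the displayed equivalence; the description of $E$ in the statement then follows from the matrix form above and $\End_A(Ae_B)^{\op}\cong B$, and since $T$ is by hypothesis a tilting module it induces a derived equivalence between $A$ and $E$ by Rickard's theorem, as recalled before the statement.

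I expect the main obstacle to be the middle step: identifying $I^1$ and the comparison map $\bar d$ precisely enough to evaluate $\Hom_A(-,Ae_B)$ on the presentation, and checking that every error term in the resulting exact sequences is accounted for by exactly one (possibly non-split) copy of the minimal projective presentation of $M_B$ over $B$, so that the sole obstruction to $\Hom_A(\tau_A^{-1}(Ae_C),Ae_B)=0$ is the non-projectivity of $M_B$. What keeps this bookkeeping under control is precisely the input from the hypothesis that $T$ is a generalized APR tilting module — the self-orthogonality $\Ext^1_A(T,T)=0$ and the bound $\pd_A\tau_A^{-1}(Ae_C)\le1$ — which together kill the higher syzygies and the potential $\Ext^1$-obstructions in the exact sequences involved.
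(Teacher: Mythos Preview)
Your overall reduction is right and matches the paper: one must show $\Hom_A(\tau_A^{-1}(Ae_C),Ae_B)=0$ if and only if $M_B$ is projective, and the other corner $\Hom_A(Ae_B,\tau_A^{-1}(Ae_C))$ is nonzero. But your route to the equivalence is far more intricate than necessary, and the crucial middle step is not actually carried out --- you describe only the $B$-socle part of $I^1$, never justify why $I^1$ has \emph{no} summands with $C$-socle, and the promised ``dimension count'' remains a promise. The argument that $\Hom_A(Ae_B,\tau_A^{-1}(Ae_C))\neq0$ is also garbled: the phrase ``$\tau_A$ annihilates such a module only if it is projective'' does not by itself produce a contradiction with the module being inflated from $C$.

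The paper avoids the injective copresentation entirely. Since $C$ is local self-injective, $DC\cong C$ as right $C$-modules, so the right $A$-module $DC$ has minimal projective presentation $P_1\to e_CA\to DC\to 0$ where $P_1\in\add(e_BA)$ is a projective cover of $M_B$. Applying $\Hom_A(-,A_A)$ and using $\Hom_A(DC,A_A)=0$ (this uses that $_CM$ has a free summand) yields the short exact sequence
\[
0\longrightarrow C\longrightarrow \Hom_A(P_1,A_A)\longrightarrow \tau_A^{-1}(C)\longrightarrow 0,
\]
which is the minimal projective resolution of $\tau_A^{-1}(C)$, with middle term in $\add(Ae_B)$. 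This immediately gives $e_B\tau_A^{-1}(C)\neq0$. Applying $\Hom_A(-,Ae_B)$ and using $\Ext^1_A(\tau_A^{-1}(C),Ae_B)=0$ (tilting) gives a short exact sequence of right $B$-modules
\[
0\longrightarrow \Hom_A(\tau_A^{-1}(C),Ae_B)\longrightarrow \Hom_A(\Hom_A(P_1,A_A),Ae_B)\longrightarrow M\longrightarrow 0.
\]
The middle term lies in $\add(B_B)$ because $\Hom_A(P_1,A_A)\in\add(Ae_B)$; hence if the left term vanishes, $M_B$ is projective. Conversely, if $M_B$ is projective then $P_1\cong M_A$, and a direct computation (double-dual for projectives) identifies the middle term with $M$, so the left term vanishes. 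This replaces your entire recollement/$\nu^{-1}$ bookkeeping with two lines.
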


In \cite{HKL1} Angeleri H\"{u}gel, Koenig, and Liu described a machinery to construct tilting objects using recollements extensively investigated in \cite{HKL1, HKL2, HKLY,LVY, Koenig, P}. Since $A = (B, C, M)$, there exist certain special recollements derived from the special triangular structure. Using the strategy introduced in \cite{HKL1} and these special recollements, we obtain the following theorem unifying quite a few results in \cite{AH, Ladkani, Maycock}. \footnote{Note that different from Theorem 4.5 in \cite{LVY}, we do not need to assume that $A$, $B$, and $C$ have finite global dimensions.}

\begin{theorem}
Let $A = (B, C, M)$ and suppose that $Y \in D^b(C)$ and $Z \in D^b(B)$ are tilting objects. Then:
\begin{enumerate}
\item The object $i_{\ast} (Y) \oplus j_!(Z) \in D^b(A)$ is tilting if and only if for $n \neq 0$, $\Hom_{D^b(A)} (i_{\ast} (Y), j_!(Z)[n]) = 0$. If this holds, then $A$ is derived equivalent to
\begin{equation*}
E = \begin{bmatrix} \End_{D^b(A)} (j_!(Z)) ^{\op} & 0 \\ \Hom_{D^b(A)} (i_{\ast} (Y), j_!(Z))) & \End_{D^b(A)} (i_{\ast} (Y)) ^{\op} \end{bmatrix}.
\end{equation*}

\item Suppose that $_CM$ has finite projective dimension. Then $i_{\ast} (Y) \oplus j_{\ast} (Z) \in D^b(A)$ is a tilting object if and only if one has $\Hom_{D^b(A)} (j_{\ast} (Z), i_{\ast} (Y)[n]) = 0$ for $n \neq 0$. If this holds, $A$ is derived equivalent to
\begin{equation*}
E = \begin{bmatrix} \End_{D^b(A)} (i_{\ast} (Y)) ^{\op} & 0 \\ \Hom_{D^b(A)} (j_{\ast} (Z), i_{\ast} (Y)) & \End_{D^b(A)} ((j_{\ast} (Z)) ^{\op}) \end{bmatrix}.
\end{equation*}
\end{enumerate}
\end{theorem}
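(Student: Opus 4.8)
The plan is to run both parts through the classical recollement attached to the idempotent $e_B\in A$. Writing an $A$-module as a triple $(X,Y,f\colon M\otimes_B X\to Y)$ with $X$ a left $B$-module and $Y$ a left $C$-module, one has $e_BAe_B=B$ and $A/Ae_BA\cong C$, yielding a recollement $D(C)\xrightarrow{i_\ast}D(A)\xrightarrow{j^\ast}D(B)$ in which $i_\ast$ is restriction along $A\to C$, so $i_\ast(Y)=(0,Y,0)$; $j^\ast=e_B(-)$ sends $(X,Y,f)$ to $X$; $j_!=Ae_B\otimes^{\mathbf L}_B(-)$, so $j_!(Z)=(Z,M\otimes^{\mathbf L}_B Z,\mathrm{can})$; and $j_\ast$ is restriction along $A\to B$, so $j_\ast(Z)=(Z,0,0)$. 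I will use the standard recollement facts: $i_\ast,j_!,j_\ast$ are fully faithful, $j^\ast i_\ast=0$, and the canonical triangle $i_\ast i^!X\to X\to j_\ast j^\ast X\to$ evaluated at $X=Ae_B=j_!(B)$ is the short exact sequence $0\to i_\ast(M)\to Ae_B\to j_\ast(B)\to0$ (the right-hand term is $j_\ast(B)$ since $j^\ast(Ae_B)=B$, and the kernel is $i_\ast(M)$ since $i^!(Ae_B)=e_CAe_B\cong{}_CM$). I also note $Ae_C=i_\ast(C)$, that $i_\ast$ and $j_!$ send perfect complexes to perfect complexes (they are $Ae_C\otimes_C(-)$ and $Ae_B\otimes_B(-)$, and $Ae_B,Ae_C$ are projective over $A$), and that $j_\ast$ does so when ${}_CM$ has finite projective dimension, because then $j_\ast(B)=(B,0,0)$ has finite projective dimension over $A$ by the above sequence. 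Recall that $T$ is a tilting complex over $A$ precisely when $T$ is perfect, $\Hom_{D^b(A)}(T,T[n])=0$ for $n\neq0$, and $A$ lies in the thick subcategory generated by $T$; Rickard's theorem then provides a derived equivalence $D^b(A)\simeq D^b(\End_{D^b(A)}(T)^{\op})$.

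First, self-orthogonality. Write $T=T_1\oplus T_2$ with $(T_1,T_2)=(i_\ast Y,j_!Z)$ in part (1) and $(T_1,T_2)=(i_\ast Y,j_\ast Z)$ in part (2), and split $\Hom_{D^b(A)}(T,T[n])$ into the four groups $\Hom(T_p,T_q[n])$. The diagonal groups vanish for $n\neq0$ because $i_\ast,j_!,j_\ast$ are fully faithful and $Y,Z$ are tilting. One off-diagonal group vanishes in \emph{every} degree, by adjunction together with $j^\ast i_\ast=0$: in part (1), $\Hom(j_!Z,i_\ast Y[n])\cong\Hom_{D^b(B)}(Z,j^\ast i_\ast Y[n])=0$; in part (2), $\Hom(i_\ast Y,j_\ast Z[n])\cong\Hom_{D^b(B)}(j^\ast i_\ast Y,Z[n])=0$. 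The remaining group is exactly the one in the statement, so $T$ is self-orthogonal if and only if that group vanishes for $n\neq0$.

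Next, I would verify that $A\in\operatorname{thick}_{D^b(A)}(T)$. Since $A=Ae_B\oplus Ae_C$ it suffices to place $Ae_B$ and $Ae_C$ in $\operatorname{thick}(T)$. As $i_\ast,j_!,j_\ast$ are triangle functors, $i_\ast(\operatorname{thick}_{D^b(C)}(Y))\subseteq\operatorname{thick}(i_\ast Y)$, $j_!(\operatorname{thick}_{D^b(B)}(Z))\subseteq\operatorname{thick}(j_!Z)$, and $j_\ast(\operatorname{thick}_{D^b(B)}(Z))\subseteq\operatorname{thick}(j_\ast Z)$; combined with $B\in\operatorname{thick}_{D^b(B)}(Z)$ and $C\in\operatorname{thick}_{D^b(C)}(Y)$ (because $Y,Z$ are tilting) this gives $Ae_C=i_\ast(C)\in\operatorname{thick}(T)$ in all cases, $Ae_B=j_!(B)\in\operatorname{thick}(T)$ in part (1), and $j_\ast(B)\in\operatorname{thick}(T)$ in part (2). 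In part (1) we are finished: generation imposes no condition, and $T=i_\ast Y\oplus j_!Z$ is perfect with no hypothesis on $M$. In part (2) one obtains $Ae_B\in\operatorname{thick}(T)$ from the short exact sequence $0\to i_\ast(M)\to Ae_B\to j_\ast(B)\to0$: both outer terms lie in $\operatorname{thick}(T)$ --- $j_\ast(B)$ as just noted, and $i_\ast(M)$ precisely because the hypothesis forces ${}_CM\in\operatorname{thick}_{D^b(C)}(Y)$ --- so $Ae_B$, being an extension of them, does too; meanwhile the same hypothesis makes $T=i_\ast Y\oplus j_\ast Z$ perfect. Hence in part (2) as well, once the finiteness hypothesis is in force, generation imposes no further condition. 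This use of the hypothesis on ${}_CM$, both to make $j_\ast Z$ perfect and to build $Ae_B$ out of $T$, is the step I expect to be the crux; everything else is formal recollement bookkeeping.

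Finally, granting that $T$ is a tilting complex, I would identify $E$. The vanishing in every degree of $\Hom(j_!Z,i_\ast Y)$ in part (1), respectively of $\Hom(i_\ast Y,j_\ast Z)$ in part (2), shows that $\End_{D^b(A)}(T)$ is a triangular matrix algebra, and therefore so is its opposite $E=\End_{D^b(A)}(T)^{\op}$. Reading off the entries of $E$ and rewriting the diagonal blocks via the isomorphisms $\End_{D^b(A)}(i_\ast Y)\cong\End_{D^b(C)}(Y)$ and $\End_{D^b(A)}(j_!Z)\cong\End_{D^b(B)}(Z)$ (respectively $\End_{D^b(A)}(j_\ast Z)\cong\End_{D^b(B)}(Z)$), which come from full faithfulness, reproduces precisely the matrix $E$ displayed in the statement. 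Rickard's theorem then gives the asserted derived equivalence between $A$ and $E$, finishing the proof.
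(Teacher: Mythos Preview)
Your proof is correct and follows essentially the same strategy as the paper: check compactness via the fact that $i_\ast$, $j_!$ (and, under the hypothesis on ${}_CM$, also $j_\ast$) preserve perfect complexes; split self-orthogonality into four pieces and kill one off-diagonal block automatically; and verify generation by pushing $B\in\operatorname{thick}(Z)$ and $C\in\operatorname{thick}(Y)$ through the recollement functors, using in part (2) the short exact sequence $0\to i_\ast(M)\to Ae_B\to j_\ast(B)\to 0$ together with the finiteness of $\pd_C M$. The only cosmetic differences are that for the automatic off-diagonal vanishing you invoke $j^\ast i_\ast=0$ via adjunction, whereas the paper argues directly from $\Hom_A(Ae_B,Ae_C)=0$ in part (1) and from $i^! j_\ast=0$ in part (2); both routes are immediate from the recollement axioms.
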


The paper is organized as follows. In Section 2 we recall the definition of generalized APR tilting modules, and prove the first theorem. Preliminary results on recollements and torsion theories of triangular matrix algebras are collected in Section 3. In the last section we use these recollements to construct tilting objects, prove the second theorem, and deduce a few applications.

We include some notation here. For an algebra $\Lambda$, $\Lambda \Module$ and $\Lambda \module$ are the category of all left $\Lambda$-modules and the category of finitely generated left modules respectively. The unbounded derived category and bounded derived category of $\Lambda \Module$ are denoted by $D(\Lambda)$ and $D^b(\Lambda)$ respectively. By $K^b (_{\Lambda} P)$ we mean the homotopy category of perfect complexes. For a $\Lambda$-module or a complex $X$, $\add (X)$ is the additive category consisting of direct summands of finite direct sums of $X$, and $\Tria (X)$ is the smallest triangulated category containing $X$. The degree shift functor $[-]$ is as usually defined.

\section{Generalized APR tilting modules}

Given two algebras $B, C$ and a $(C, B)$-bimodule $M$, the triple $(B, C, M)$ defines another algebra $A = \begin{bmatrix} B & 0 \\ M & C \end{bmatrix}$, called a \textit{triangular matrix algebra}, whose multiplication is determined by matrix product.

When $A = (B, C, M)$ satisfies the following two conditions:
\begin{itemize}
\item $C$ is a self-injective local algebra (so it is Frobenius);
\item $_CM$ has a free summand $_CC$,
\end{itemize}
we proved in \cite{Li} that $T = Ae_B \oplus \tau_A ^{-1} (Ae_C)$ is a tilting module, where $\tau_A$ is the Auslander-Reiten translation. Since this construction is a natural generalization of APR tilting modules originated in \cite{APR}, we call them \textit{generalized APR tilting modules}. In particular, if $C$ is isomorphic to the base field $k$, this generalized APR tilting module coincides with the classical APR tilting module.

Let $E = \End_A (T) ^{\op}$. The following proposition tells us under what condition $E$ is again a triangular matrix algebra.

\begin{proposition}
Let $A = (B, C, M)$ be a finite dimensional algebra such that $C$ is a self-injective local algebra and $_CM$ has a free summand $_CC$. Then $E$ is a triangular matrix algebra glued by $\End_A (\tau _A^{-1} (Ae_C)) ^{\op}$ and $B$ if and only if $M_B$ is projective.
\end{proposition}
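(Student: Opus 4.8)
The plan is to put $T_1 = Ae_B$, $T_2 = \tau_A^{-1}(Ae_C)$, so that $T = T_1 \oplus T_2$ and $E = \End_A(T)^{\op}$, and to read off $E$ from its natural presentation as a Morita context ring. First I would record that $Ae_C$ is indecomposable ($\End_A(Ae_C) \cong (e_CAe_C)^{\op} = C^{\op}$ is local) and not injective: its injective envelope is $D(e_CA)$, of dimension $\dim_k M + \dim_k C > \dim_k C = \dim_k Ae_C$ since $M \neq 0$ by the free-summand hypothesis. Hence $T_2$ is indecomposable, non-projective, and shares no summand with the projective module $T_1$, so the idempotents of $E$ coming from $T = T_1 \oplus T_2$ present $E$ as
\begin{equation*}
E \;\cong\; \begin{bmatrix} \End_A(Ae_B)^{\op} & \Hom_A(Ae_B, \tau_A^{-1}(Ae_C)) \\ \Hom_A(\tau_A^{-1}(Ae_C), Ae_B) & \End_A(\tau_A^{-1}(Ae_C))^{\op} \end{bmatrix},
\end{equation*}
with $\End_A(Ae_B)^{\op} \cong e_BAe_B = B$. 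Thus $E$ is a triangular matrix algebra glued by $\End_A(\tau_A^{-1}(Ae_C))^{\op}$ and $B$ exactly when one of the two off-diagonal $\Hom$-spaces vanishes; since the upper one is $\Hom_A(Ae_B, \tau_A^{-1}(Ae_C)) \cong e_B \cdot \tau_A^{-1}(Ae_C)$, which turns out to be nonzero, everything reduces to proving $\Hom_A(\tau_A^{-1}(Ae_C), Ae_B) = 0 \iff M_B \text{ is projective}$, and, when this holds, to matching corners with the displayed matrix after relabeling.

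For the reduction I would use the identity $\Hom_A(\tau_A^{-1}N, P) \cong \Hom_A(\Omega_A^{-2}N, \nu_A P)$, valid for any module $N$ and any projective $P$, where $\nu_A$ is the Nakayama functor and $\Omega_A^{-2}$ the second cosyzygy. It follows by applying $\Hom_A(-,P)$ to the defining presentation $\nu_A^{-1}I^0 \to \nu_A^{-1}I^1 \to \tau_A^{-1}N \to 0$ arising from a minimal injective copresentation $0 \to N \to I^0 \to I^1$, using that $\nu_A$ is an equivalence from projectives to injectives to identify $\Hom_A(\nu_A^{-1}I^\bullet, P) \cong \Hom_A(I^\bullet, \nu_A P)$, and then comparing with $\Hom_A(-, \nu_A P)$ applied to $I^0 \to I^1 \to \Omega_A^{-2}N \to 0$.

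Then I would compute both sides for $N = Ae_C$ and $P = Ae_B$, using the description of left $A$-modules as triples $(X_B, Y_C, \phi\colon M\otimes_B X \to Y)$. Here $Ae_C = (0, C, 0)$; as $C$ is self-injective local its socle is simple, so $I^0 = D(e_CA) = (D(M_B), D(C_C), \phi^0)$, and the inclusion $Ae_C \hookrightarrow I^0$ is an isomorphism on $C$-components (a monomorphism between spaces of equal dimension), giving $\Omega_A^{-1}(Ae_C) = (D(M_B), 0, 0)$. Any module $(X,0,0)$ is a $B$-module via $A \twoheadrightarrow A/Ae_CA \cong B$ with the same socle over $A$ and over $B$, and $(I_B(S),0,0)$ is $A$-injective for each simple $B$-module $S$; hence the $A$-injective envelope of $(D(M_B),0,0)$ is $(I_B(D(M_B)),0,0)$ and $\Omega_A^{-2}(Ae_C) = (\Omega_B^{-1}(D(M_B)), 0, 0)$. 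Since $\nu_A(Ae_B) = D(e_BA) = (D(B_B),0,0)$, morphisms of $A$-modules supported on the $B$-component reduce to $B$-module morphisms, and $\Hom_B(-, D(B_B)) \cong D(-)$, so
\begin{equation*}
\Hom_A\!\left(\tau_A^{-1}(Ae_C), Ae_B\right) \;\cong\; \Hom_B\!\left(\Omega_B^{-1}(D(M_B)), D(B_B)\right) \;\cong\; D\!\left(\Omega_B^{-1}(D(M_B))\right),
\end{equation*}
which is zero iff $D(M_B)$ is an injective $B$-module, i.e.\ iff $M_B$ is projective. The same computation shows the $B$-component of $\tau_A^{-1}(Ae_C)$ is $\nu_B^{-1}(I_B(D(M_B))) \neq 0$, which confirms $\Hom_A(Ae_B, \tau_A^{-1}(Ae_C)) \neq 0$; and when $M_B$ is projective the presentation of $E$ above becomes triangular and, after relabeling the two corners, is precisely the matrix in the statement.

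The step I expect to be most delicate is the explicit determination of the minimal injective copresentation of $Ae_C$ over $A$ — identifying $I^0$ with $D(e_CA)$, seeing the $C$-components cancel, and then controlling the second cosyzygy through injective $B$-modules — together with the constant bookkeeping needed to translate between $A$-modules and their $B$- and $C$-components; the cosyzygy identity and the final corner-matching are comparatively formal.
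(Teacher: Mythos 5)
Your proposal is correct, and it reaches the statement by a genuinely different route for the key computation. Like the paper, you first reduce triangularity of $E$ to the single vanishing $\Hom_A(\tau_A^{-1}(Ae_C), Ae_B) = 0$ after observing $\Hom_A(Ae_B, \tau_A^{-1}(Ae_C)) \neq 0$ (your justification via $e_B\tau_A^{-1}(Ae_C) \cong \nu_B^{-1}(I_B(D(M_B))) \neq 0$ is sound, since any image of $Ae_C$ has zero $e_B$-part). From there the paper works with the projective presentation of $DC$ as a right $A$-module, obtains the sequence $0 \to C \to \Hom_A(P_1, A_A) \to \tau_A^{-1}(C) \to 0$ with $P_1$ a projective cover of $M_B$, applies $\Hom_A(-, Ae_B)$, and crucially invokes the tilting property of $Ae_B \oplus \tau_A^{-1}(Ae_C)$ to kill $\Ext_A^1(\tau_A^{-1}(C), Ae_B)$, finishing by the observation that $\Hom_A(\Hom_A(P_1,A_A), Ae_B) \in \add(B)$. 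You instead use the identity $\Hom_A(\tau_A^{-1}N, P) \cong \Hom_A(\Omega_A^{-2}N, \nu_A P)$ and compute the minimal injective copresentation of $Ae_C$ explicitly: $I^0 = D(e_CA)$ with the $C$-components cancelling (this is where self-injectivity and locality of $C$ enter, via the simple socle and the dimension count), so $\Omega_A^{-1}(Ae_C) \cong (D(M_B),0,0)$ is an inflated $B$-module, whence $\Omega_A^{-2}(Ae_C) \cong (\Omega_B^{-1}(D(M_B)),0,0)$ and the Hom-space becomes $D(\Omega_B^{-1}(D(M_B)))$, which vanishes precisely when $M_B$ is projective. Note that your chain $\nu_A^{-1}I^0 \to \nu_A^{-1}I^1 \to \tau_A^{-1}(Ae_C) \to 0$ is the dual construction of the paper's sequence (2.1), so the starting data coincide; the difference is in what is done with it. Your version buys independence from the tilting hypothesis (you never need the $\Ext^1$-vanishing, and the free-summand condition on $_CM$ is only used through $M \neq 0$), and it treats both implications in one stroke; the paper's version is shorter given that (2.1) and the tilting property are already available from \cite{Li}, and it avoids the bookkeeping of cosyzygies and the Nakayama functor. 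I see no gap in your argument; the only implicit step shared with the paper is that ``$E$ is triangular glued by these two algebras'' is read off from the Peirce decomposition of $\End_A(T)^{\op}$, i.e., from the vanishing of one of the two off-diagonal Hom-spaces.
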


We briefly recall the construction of $\tau _A^{-1} (Ae_C)$. For details, please refer to \cite{Li}. Since $C$ is self-injective and local, one has $DC \cong C^{\op}$, where $D = \Hom_k (-,k)$. Therefore, $DC$ has the following projective presentation as a right $A$-module: $P_1 \to e_CA \to DC \to 0$, where $P_1$ is a projective cover of $M_B$.

Applying the functor $\Hom_A (-, A_A)$ we get:
\begin{equation*}
0 \rightarrow \Hom_A (DC, A_A) \rightarrow \Hom_A (e_CA, A_A) \rightarrow \Hom_A (P_1, A_A) \rightarrow \tau _A^{-1} (C) \rightarrow 0.
\end{equation*}
Note that $\Hom_A (DC, A_A) = 0$ by the assumption that $_CM$ has a summand $_CC$ (see \cite[Lemma 4.2]{Li}), and $ \Hom_A (e_CA, A_A) \cong Ae_C = C$. Thus the above sequence turns out to be:
\begin{equation}
0 \rightarrow C \rightarrow \Hom_A (P_1, A_A) \rightarrow \tau _A^{-1} (C) \rightarrow 0,
\end{equation}
a minimal projective resolution of $\tau _A^{-1} (C)$. This sequence in general is not almost split; see \cite[Example 4.6]{Li}.

\begin{proof}
The algebra $E$ is triangular matrix if and only if $\Hom_A (\tau _A^{-1} (C), Ae_B) = 0$ since $\Hom_A (Ae_B, \tau _A^{-1} (C)) \neq 0$. Indeed, from the definition of $P_1$ and the exact sequence (2.1) one sees that the value $e_B \Hom_A (P_1, A_A)$ is not 0. Consequently, $e_B \tau _A^{-1} (C)$ and hence $\Hom_A (Ae_B, \tau _A^{-1} (C))$ are not 0.

Applying $\Hom_A (-, Ae_B)$ to (2.1) one gets
\begin{align*}
& 0 \rightarrow \Hom_A (\tau _A^{-1} (C), Ae_B) \rightarrow \Hom_A(\Hom_A (P_1, A_A), Ae_B) \rightarrow\\
& \Hom_A(C, Ae_B) \rightarrow \Ext_A (\tau _A^{-1} (C), Ae_B) \rightarrow 0.
\end{align*}
But the last term is 0 since $\tau _A^{-1} (C) \oplus Ae_B$ is a tilting module. Therefore, the first term is 0 if and only if
\begin{equation}
\Hom_A(\Hom_A (P_1, A_A), Ae_B) \cong \Hom_A(C, Ae_B) \cong e_CAe_B = M.
\end{equation}
In the left side, from the construction $P_1 \in \add (e_B A)$. Therefore, $\Hom_A (P_1, A_A) \in \add (Ae_B)$, and hence
\begin{equation}
\Hom_A(\Hom_A (P_1, A_A), Ae_B) \in \add (\End(Ae_B)) = \add (B_B).
\end{equation}

If $E$ is a triangular matrix algebra, then from (2.2) and (2.3), $_BM \in \add (B_B)$ is projective. Conversely, if $M_B$ is projective, then $M_A$ is projective as well, so $P_1 \cong M_A$. Since $\Hom_A (P_1, A_A) \in \add (Ae_B)$ and $\Hom_A (Ae_B, A) \cong \Hom_A (Ae_B, Ae_B) \cong B$, we deduce
\begin{equation*}
\Hom_A(\Hom_A (P_1, A_A), Ae_B) \cong \Hom_A(\Hom_A (P_1, A_A), A) \cong P_1 \cong M.
\end{equation*}
That is, (2.2) is true. This finishes the proof.
\end{proof}

\begin{corollary}
Let $A$ be as before and suppose that $M_B$ is projective. Then $A$ is derived equivalent to
\begin{equation*}
E = \End_A (Ae_B \oplus \tau _A^{-1} (Ae_C)) ^{\op} = \begin{bmatrix} \End_A(\tau _A^{-1} (C) ^{\op} & 0 \\ \Hom_A (Ae_B, \tau _A^{-1} (C)) &  B\end{bmatrix}.
\end{equation*}
\end{corollary}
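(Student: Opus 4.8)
The plan is short, since this corollary packages together facts that are essentially in hand. The substantive input is that, under the standing hypotheses on $A = (B,C,M)$, the module $T = Ae_B \oplus \tau_A^{-1}(Ae_C)$ is a tilting $A$-module, proved in \cite{Li}. Moreover, the minimal projective resolution (2.1) shows that $\pd_A \tau_A^{-1}(C) \le 1$, because its two terms $Ae_C \cong C$ and $\Hom_A(P_1, A_A)$ are projective left $A$-modules (recall $P_1 \in \add(e_BA)$, so $\Hom_A(P_1, A_A) \in \add(Ae_B)$); together with projectivity of $Ae_B$ this makes $T$ a classical tilting module of projective dimension at most one. In particular $T$ is a tilting complex in $K^b({}_AP)$, so by Rickard's theorem \cite{Rickard} it induces a derived equivalence between $A$ and $E := \End_A(T)^{\op}$. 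That settles the derived equivalence claim; what remains is to write $E$ in the displayed triangular form.

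For the explicit description I would decompose $T = Ae_B \oplus \tau_A^{-1}(C)$ and present $\End_A(T)$ as the $2\times 2$ matrix algebra, with composition given by matrix multiplication,
\begin{equation*}
\End_A(T) = \begin{bmatrix} \End_A(Ae_B) & \Hom_A(\tau_A^{-1}(C), Ae_B) \\ \Hom_A(Ae_B, \tau_A^{-1}(C)) & \End_A(\tau_A^{-1}(C)) \end{bmatrix}.
\end{equation*}
By the Proposition, the hypothesis that $M_B$ is projective forces $\Hom_A(\tau_A^{-1}(C), Ae_B) = 0$ (this vanishing is precisely the content of the proof of the Proposition), so $\End_A(T)$ is lower triangular. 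Passing to the opposite algebra, using the standard identification $\End_A(Ae_B)^{\op} \cong e_BAe_B = B$, and noting that $\Hom_A(Ae_B, \tau_A^{-1}(C))$ carries a natural $(B, \End_A(\tau_A^{-1}(C))^{\op})$-bimodule structure (left $B$-action via the right $\End_A(Ae_B)$-action, right $\End_A(\tau_A^{-1}(C))^{\op}$-action via the left $\End_A(\tau_A^{-1}(C))$-action), one obtains
\begin{equation*}
E = \End_A(T)^{\op} \cong \begin{bmatrix} \End_A(\tau_A^{-1}(C))^{\op} & 0 \\ \Hom_A(Ae_B, \tau_A^{-1}(C)) & B \end{bmatrix},
\end{equation*}
as claimed.

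I expect no genuine obstacle here: the two substantive facts — that $T$ is tilting and that projectivity of $M_B$ produces the vanishing $\Hom_A(\tau_A^{-1}(C), Ae_B) = 0$ — are already available from \cite{Li} and from the Proposition, and everything else is bookkeeping. The only points needing a little care are fixing the conventions for the $2\times 2$ matrix presentation of $\End_A(T)$ so that the surviving off-diagonal Hom lands in the lower-left corner after taking opposites, and verifying that the natural bimodule structure on $\Hom_A(Ae_B, \tau_A^{-1}(C))$ is indeed the one required for the right-hand triangular matrix algebra to be well defined.
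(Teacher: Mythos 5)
Your proposal is correct and follows essentially the same route the paper intends: the corollary is immediate from the fact (from \cite{Li}) that $T = Ae_B \oplus \tau_A^{-1}(Ae_C)$ is a tilting module, hence induces a derived equivalence between $A$ and $\End_A(T)^{\op}$, combined with the Proposition's vanishing $\Hom_A(\tau_A^{-1}(C), Ae_B) = 0$ when $M_B$ is projective, which yields the displayed triangular form. Your added bookkeeping (the length-one projective resolution from (2.1), the identification $\End_A(Ae_B)^{\op} \cong e_BAe_B = B$, and the bimodule structure on the lower-left corner) is accurate and consistent with the paper.
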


The following example explains our construction.

\begin{example} \normalfont
Let $A$ be the path algebra of the following quiver with relations $\delta^a = \theta ^b = 0$ and $\alpha \delta = \theta \alpha$, where $a, b \geqslant 1$. Note that in this example $B = \langle 1_x, \delta, \ldots, \delta^{a-1} \rangle$ is of dimension $a$, $C = \langle 1_y, \theta, \ldots, \theta^{b-1} \rangle$ has dimension $b$, and the dimension of $M$ is $\min \{a, b \}$.
\begin{equation*}
\xymatrix{ x \ar@(ul,dl)[]|{\delta} \ar[r]^{\alpha} & y \ar@(ur,dr)[]|{\theta}}
\end{equation*}

We have three cases.

(1): If $b > a$, then $_CM$ is a proper quotient module of $_CC$, so $T = P_x \oplus \tau_A ^{-1} (P_y)$ is not a tilting module.

(2): If $a > b$, then $_CM$ is isomorphic to  $_CC$. Therefore, $T$ is a tilting module. However, since $a > b$, $M_B$ is not projective, and we conclude that the endomorphism algebra of $T$ is not triangular. To see this, let us do an explicit calculation for $a = 3$ and $b = 2$. We have:
\begin{equation*}
P_x = \begin{matrix} & & x & \\ & x & & y \\ x & & y & \end{matrix}, \quad P_y = \begin{matrix} y \\ y \end{matrix}
\end{equation*}
and the following almost split sequence:
\begin{equation*}
0 \longrightarrow P_y \longrightarrow \begin{matrix} & & x & & y \\ & x & & y & \\ x & & & & \end{matrix} \longrightarrow \tau_A^{-1} (P_y) = \begin{matrix} x \\ x \\ x \end{matrix} \longrightarrow 0.
\end{equation*}
Note that the middle term is not $P_x$. It is easy to see that both $\Hom_A (\tau ^{-1}_A (P_y), P_x)$ and $\Hom_A (P_x, \tau ^{-1}_A (P_y))$ are nonzero. Therefore, the endomorphism algebra of $T$ is not triangular.

(3): If $a = b$, then $_CM$ is isomorphic to $_CC$, and hence $T$ is tilting. Moreover, $M_B$ is isomorphic to $B_B$. So by the previous proposition, the endomorphism algebra of $T$ should be triangular. Actually, in this case the structure of $\tau_A^{-1} (P_y)$ is as shown in (2), but the socle of $P_x$ is simple and is isomorphic to $S_y$, the simple module corresponding to $y$. Therefore, $\Hom_A (\tau ^{-1}_A (P_y), P_x) = 0$, and hence the endomorphism algebra of $T$ is triangular.
\end{example}

The above result may have applications to certain finite categories related to finite groups. For example, let $k$ be a field of characteristic $p > 0$ and let $G$ be a finite $p$-group. Let $\mathcal{S}$ be a subset of subgroups of $G$. The \emph{transporter category} $\mathcal{T} _{\mathcal{S}}$ and the orbit category $\mathcal{O} _{\mathcal{S}}$ are defined as in Section 2 of \cite{Webb}. Note that the category algebras of skeletal categories of $\mathcal{T} _{\mathcal{S}}$ and $\mathcal{O} _{\mathcal{S}}$ are triangular matrix algebras. Moreover, it is easy to see that our construction of generalized APR tilting modules applies to projective modules corresponded to maximal objects in these skeletal categories.

\section{Recollements of module and derived categories}

In this section we consider certain special recollements for module categories and derived categories of triangular matrix algebras.

\begin{definition}
\cite[Definition 2.6]{PV} Let $\mathcal{C}$, $\mathcal{D}$ and $\mathcal{E}$ be abelian categories. A recollement of $\mathcal{C}$ by $\mathcal{D}$ and $\mathcal{E}$ is diagrammatically expressed as follows
\begin{equation*}
\xymatrix{\mathcal{D} \ar[rr] ^{i_{\ast}} & & \mathcal{C} \ar[rr] ^{j^{\ast}} \ar@/_1.5pc/[ll] _{i^{\ast}} \ar@/^1.5pc/[ll] _{i^!} & & \mathcal{E} \ar@/^1.5pc/[ll] _{j_{\ast}} \ar@/_1.5pc/[ll] _{j_!}}
\end{equation*}
with six additive functors $i^{\ast}, i_{\ast}, i^!, j_!, j^{\ast}, j_{\ast}$ satisfying the following conditions:
\begin{enumerate}
\item $(i^{\ast}, i_{\ast}, i^!)$ and $(j_!, j^{\ast}, j_{\ast})$ both are adjoint triples;
\item $i_{\ast}, j_!$ and $j_{\ast}$ are fully faithful;
\item the kernel of $j^{\ast}$ coincides with the image of $i_{\ast}$.
\end{enumerate}
\end{definition}

Functors appeared in the above recollement have many special properties. For instance, $i^{\ast} j_! = 0 = i^! j_{\ast}$, $i^{\ast} i_{\ast} = \text{Id}_{\mathcal{D}} = i^!i_{\ast}$, and $j^{\ast} j_! = j^{\ast} j_{\ast} = \text{Id} _{\mathcal{E}}$. Moreover, the functors $i_{\ast}$ and $j^{\ast}$ are exact.

Here is a well known example of recollements of abelian categories; see \cite{PV}.

\begin{example}
Let $\Lambda$ be a ring and let $e \in \Lambda$ be an idempotent element. Then a recollement of $\Lambda \Module$ by $e \Lambda e \Module$ and $\Lambda / \Lambda e \Lambda \Module$ is described as below:
\begin{equation*}
\xymatrix{\Lambda / \Lambda e \Lambda \Module \ar[rr] ^{\textnormal{incl}} & & \Lambda \Module \ar[rr] ^{\Hom_{\Lambda} (\Lambda e, -)} \ar@/_1.5pc/[ll] _{(\Lambda / \Lambda e\Lambda) \otimes_{\Lambda} -} \ar@/^1.5pc/[ll] ^{\Hom_{\Lambda} (\Lambda /\Lambda e\Lambda, -)} & & e\Lambda e \Module \ar@/^1.5pc/[ll] ^{\Hom _{e \Lambda e} (e\Lambda, -)} \ar@/_1.5pc/[ll] _{\Lambda e \otimes _{e\Lambda e} -}}
\end{equation*}
\end{example}

The following proposition tells us that triangular matrix algebras are characterized by special properties of these functors.

\begin{proposition}
Let $B$, $C$ and $A$ be finite dimensional algebras. Then the following are equivalent:
\begin{enumerate}
\item The algebra $A$ is isomorphic to a triangular matrix algebra $(B,C,M)$.

\item There is a recollement as follows satisfying one of the following conditions:
\begin{equation}
\xymatrix{C \Module \ar[rr] ^{i_{\ast}} & & A \Module \ar[rr] ^{j^{\ast}} \ar@/_1.5pc/[ll] _{i^{\ast}} \ar@/^1.5pc/[ll] _{i^!} & & B \Module \ar@/^1.5pc/[ll] _{j_{\ast}} \ar@/_1.5pc/[ll] _{j_!}}
\end{equation}
\begin{itemize}
\item $i^{\ast}$ preserves projective modules;
\item $i^!$ is exact.
\end{itemize}

\item There is a recollement as follows satisfying one of the following conditions:
\begin{equation}
\xymatrix{B \Module \ar[rr] ^{\iota_{\ast}} & & A \Module \ar[rr] ^{\tau^{\ast}} \ar@/_1.5pc/[ll] _{\iota^{\ast}} \ar@/^1.5pc/[ll] _{\iota^!} & & C \Module \ar@/^1.5pc/[ll] _{\tau_{\ast}} \ar@/_1.5pc/[ll] _{\tau_!}}
\end{equation}
\begin{itemize}
\item $\iota^{\ast}$ is exact;
\item $\tau_!$ is the inclusion functor.
\end{itemize}
\end{enumerate}
\end{proposition}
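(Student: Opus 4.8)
The plan is to prove the cycle $(1)\Rightarrow(2)\Rightarrow(1)$ together with $(1)\Rightarrow(3)\Rightarrow(1)$; the two converse implications are the substantive ones, and in each I would read an idempotent of $A$ off the recollement and then unwind a Peirce decomposition.

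For $(1)\Rightarrow(2)$ and $(1)\Rightarrow(3)$ I would exhibit the recollement of Example~3.3 attached to an explicit idempotent and verify the extra conditions by a matrix computation. Writing $A=(B,C,M)$ and taking $e=e_B$, a direct Peirce calculation (using $e_BAe_C=0$) gives $Ae_BA=Ae_B$, so that $A/Ae_BA\cong C$, and since ${}_AA=Ae_B\oplus Ae_C$ one has $A/Ae_BA\cong Ae_C$ as a left $A$-module; in particular $A/Ae_BA$ is projective over $A$. Hence $i^!=\Hom_A(A/Ae_BA,-)$ is exact, and $i^*=(A/Ae_BA)\otimes_A-$ sends ${}_AA$ to the regular module $A/Ae_BA$ and so preserves projectives (as $i^*$ does in any recollement): this gives $(2)$. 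Dually, with $e=e_C$ one computes $Ae_CA=e_CA$, so $A/Ae_CA\cong B$ and $A/Ae_CA\cong e_BA$ is projective as a \emph{right} $A$-module; hence $\iota^*=(A/Ae_CA)\otimes_A-$ is exact, while $\tau_!=Ae_C\otimes_{C}-$ carries a $C$-module $Y$ to the $A$-module obtained by restriction along the surjection $A\twoheadrightarrow A/Ae_BA\cong C$, i.e.\ $\tau_!$ is naturally isomorphic to the canonical embedding $C\Module\hookrightarrow A\Module$: this gives $(3)$.

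For $(2)\Rightarrow(1)$ the hypothesis I would use is that $i^!$ is exact, equivalently that $i_*$ preserves projectives (recall $i_*$ is a left adjoint of $i^!$). Using the description of recollements of $A\Module$ by module categories in terms of idempotent two-sided ideals recalled in this section, the recollement is the one of Example~3.3 for an idempotent $e\in A$ with $eAe\cong B$ and $A/AeA\cong C$, and the exactness of $i^!$ says precisely that $A/AeA$ is projective over $A$. Then the short exact sequence $0\to AeA\to A\to A/AeA\to 0$ of left $A$-modules splits, so the two-sided ideal $AeA$ is a direct summand of ${}_AA$, forcing (after possibly adjusting $e$ within its idempotent ideal) $AeA=Ae$, hence $eA(1-e)=0$. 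Now the Peirce decomposition of $A$ along $e$,
\begin{equation*}
A=\begin{bmatrix} eAe & eA(1-e)\\ (1-e)Ae & (1-e)A(1-e)\end{bmatrix}=\begin{bmatrix} eAe & 0\\ (1-e)Ae & (1-e)A(1-e)\end{bmatrix},
\end{equation*}
together with the ring isomorphism $A/AeA\cong(1-e)A(1-e)$ given by $a+AeA\mapsto(1-e)a(1-e)$ (which uses $eA(1-e)=0$), yields $A\cong\bigl(eAe,\,(1-e)A(1-e),\,(1-e)Ae\bigr)\cong(B,C,M)$ with $M=(1-e)Ae$ regarded as a $(C,B)$-bimodule.

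The implication $(3)\Rightarrow(1)$ is the transpose: the condition that $\iota^*$ be exact --- equivalently, that $\tau_!$ be the inclusion, which by comparison of $k$-dimensions forces $(1-f)Af=0$ --- produces an idempotent $f\in A$ with $fAf\cong C$, $A/AfA\cong B$ and $(1-f)Af=0$, so that the Peirce decomposition along $f$ is upper triangular; conjugating by the permutation interchanging the two blocks then presents $A$ as $(B,C,fA(1-f))$. The step I expect to be the main obstacle, in both converse directions, is the passage from an abstract recollement of $A\Module$ to an honest idempotent of $A$ together with the identifications $eAe\cong B$ and $A/AeA\cong C$ \emph{on the nose} rather than merely up to Morita equivalence, and the attendant bookkeeping showing that the functorial hypothesis translates exactly into the Peirce-type vanishing $eA(1-e)=0$ (respectively $(1-f)Af=0$); this is precisely where one must use both that the recollement is by the \emph{prescribed} module categories $B\Module$ and $C\Module$ and the classification of recollements of module categories recalled earlier in this section.
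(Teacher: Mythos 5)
Your overall strategy coincides with the paper's: the implications $(1)\Rightarrow(2),(3)$ are checked on the idempotent recollement of Example~3.2 with $e=e_B$ and $e=e_C$, exactly as in the paper, and for the converses both you and the paper invoke the Psaroudakis--Vit\'{o}ria classification to reduce to a recollement induced by an idempotent of $A$. The converses, however, contain genuine gaps. First, the proposition asserts that \emph{each} of the two bulleted conditions in (2), and each of the two in (3), separately implies (1); you only argue from ``$i^!$ exact'' in (2), and in (3) you assert without proof that exactness of $\iota^{\ast}$ is equivalent to $\tau_!$ being the inclusion and then run only the dimension count that pertains to the latter condition. The paper gives four separate arguments: for ``$i^{\ast}$ preserves projectives'' it computes $i^{\ast}(Af)\cong Af/AeAf$ and uses that $AeAf$ lies in the radical of $Af$; for ``$i^!$ exact'' it identifies $A/AeA\cong Af/AeAf$ and argues similarly; for ``$\iota^{\ast}$ exact'' it applies $\iota^{\ast}$ to $0\to AeAf\to Af\to Af/AeAf\to 0$ and uses a trace argument to force $AeAf=0$; only for ``$\tau_!$ inclusion'' does it use your decomposition $\tau_!(N)\cong N\oplus(eAf\otimes_{fAf}N)$. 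Note also that your own parenthetical remark --- that $i^{\ast}$ preserves projectives in any recollement, being left adjoint to the exact functor $i_{\ast}$ --- shows the first bullet of (2) cannot be dispatched by any argument of the type you use for the second; it requires its own treatment, as in the paper.

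Second, your key step in $(2)\Rightarrow(1)$ does not deliver the stated conclusion. From projectivity of $A/AeA$ you split $0\to AeA\to A\to A/AeA\to 0$ and replace $e$ by an idempotent generator $g$ of the left ideal $AeA=Ag$; this does give $gA(1-g)=0$ and hence a triangular presentation of $A$, but with corners $gAg$ and $(1-g)A(1-g)$, and $gAg$ is in general only Morita equivalent to $eAe$, not isomorphic to it. For instance, with $A=M_2(k)\times k$ and $e=e_{11}$ in the matrix block one has $AeA=M_2(k)\times 0=Ag$ with $g$ the identity of that block, so $gAg=M_2(k)$ while $eAe\cong k$; hence your final display $A\cong(B,C,M)$ does not follow from your argument. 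The paper never changes the idempotent: it argues that the hypothesis forces $eAf=0$ for the \emph{original} $e$ (via the radical and trace arguments above), so that $B=eAe$, $C\cong A/AeA$ and $M=fAe$ are retained throughout. The obstacle you flag at the end --- that the classification identifies $B$ and $C$ with $eAe$ and $A/AeA$ only up to Morita equivalence --- is a real one which the paper also passes over with a ``without loss of generality''; but the loss of the prescribed $B$ caused by your own change of idempotent is an additional, avoidable defect, and it is precisely the point where your route diverges from, and falls short of, the paper's.
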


\begin{proof}
$(1) \Rightarrow (2)$. Let $\Lambda = A$ and $e = e_B$ in the previous example. Then one has $A/AeA \cong C$, so $i^! \cong \Hom_A (C, -)$ is exact since $C$ is a projective $A$-module. Also, since $i^{\ast} (A) \cong A/Ae_BA \otimes_A A \cong A / Ae_BA \cong C$ is projective, $i^{\ast}$ preserves projective modules.

$(1) \Rightarrow (3)$. Let $\Lambda = A$ and $e = e_C$ in the previous example. Note that $A/AeA \cong B$ is a right projective $A$-module and $\iota ^{\ast} \cong B \otimes_A -$, so $\iota^{\ast}$ is exact. Since $Ae_C \cong e_C A e_C \cong C$, one has $\tau_! \cong C \otimes_C -$, which is the inclusion functor.

$(2) \Rightarrow (1)$. A recent result of Psaroudakis and Vit\'{o}ria (Corollary 5.5 in \cite{PV}) states that any recollement of $A\Module$ is equivalent to a recollement induced by an idempotent $e$ in $A$. Therefore, without loss of generality we assume that this recollement is induced by some idempotent $e$ in $A$, so the six functors appearing in this recollement are specified as in Example 3.2.

Suppose that the functor $i^{\ast}$ preserves projective modules. Let $f = 1_A - e$. By the assumption, $i^{\ast} (Af)$ is a projective module. But
\begin{equation*}
i^{\ast} (Af) = A/AeA \otimes_A Af \cong Af / AeAAf = Af / AeAf
\end{equation*}
Note that $eAf$ is contained in the radical of $Af$, so is $AeAf$. Therefore, $i^{\ast} (Af)$ is projective if and only if $eAf = 0$, or equivalently $\Hom _A (Ae, Af) = 0$. This implies $Ae = AeA$ and $Af = fAf$. Consequently, $A$ is the triangular matrix algebra glued by $B = eAe$ and $C = fAf$ and the $(C, B)$-bimodule $fAe$, and (1) holds.

Now consider the functor $i^!$. Note that $i^! = \Hom_A (A/AeA, -)$ is exact if and only if $A/AeA$ is a projective $A$-module. But
\begin{align*}
AeA = AeAf \oplus AeAe = AeAf \oplus Ae,\\
A / AeA =  (Ae \oplus Af) / (AeAf \oplus Ae) \cong Af / AeAf.
\end{align*}
As in the previous paragraph, $A / AeA \cong Af/ AeAf$ is projective if and only if $eAf = 0$, so (1) follows.

$(3) \Rightarrow (1)$. Again, by applying Corollary 5.9 in \cite{PV} we can assume that the given recollement is induced by an idempotent $f$. Therefore, $B = A/AfA$ and $C=fAf$. Let $e = 1_A - f$. It suffices to show that each statement in (3) implies $eAf = 0$.

First, for a finitely generated $A$-module $M$,
\begin{equation*}
\iota^{\ast} (M) = (A/AfA) \otimes_A M \cong M / \tr _{Af} (M),
\end{equation*}
where $\tr _{Af} (M)$ is the trace of $Af$ in $M$. Suppose that this is an exact functor. Consider the projective module $Af$ and $\tr _{Ae} (Af) = AeAf$. Applying $\iota^{\ast}$ to the exact sequence
\begin{equation*}
\xymatrix{0 \ar[r] & AeAf \ar[r] & Af \ar[r] & Af/(AeAf) \ar[r] & 0},
\end{equation*}
we obtain an exact sequence
\begin{equation*}
\xymatrix{0 \ar[r] & \iota^{\ast} (AeAf) \ar[r] & \iota^{\ast} (Af) \ar[r] & \iota^{\ast} (Af/AeAf) \ar[r] & 0}.
\end{equation*}
Clearly, $\iota^{\ast} (Af) = 0$, so
\begin{equation*}
0 = \iota^{\ast} (AeAf) = AeAf / \tr_{Af} (AeAf).
\end{equation*}
This happens if and only if $AeAf$ is a quotient module of $(Af)^{\oplus n}$ for some $n \geqslant 0$. But $AeAf$ is generated by $eAf$, and cannot be a quotient module of $(Af)^n$ if $n > 0$. This forces $n = 0$, so $AeAf = 0$, and hence $eAf = 0$.

Now assume that $\tau_!$ is the inclusion functor, i.e., $\tau_! (N) \cong N$ as vector spaces for every $N \in C\Module$. Then we have
\begin{equation*}
\tau_! (N) = Af \otimes_{fAf} N = (fAf \otimes _{fAf} N) \oplus (eAf \otimes _{fAf} N) \cong N \oplus (eAf \otimes _{fAf} N).
\end{equation*}
Therefore, $eAf \otimes _{fAf} N = 0$ for every $N \in fAf \Module$. This happens if and only if $eAf = 0$.
\end{proof}

\begin{remark} \normalfont
The above proposition holds for categories of finitely generated modules as well.
\end{remark}

Following \cite{BR}, we define \textit{torsion pairs}.

\begin{definition}
A torsion pair of an abelian category $\mathcal{C}$ is a pair of additive full subcategories $(\mathcal{T}, \mathcal{F})$ such that:
\begin{enumerate}
\item $\Hom _{\mathcal{C}} (\mathcal{T}, \mathcal{F}) = 0$;
\item for every object $X \in \mathcal{C}$, there is a short exact sequence $0 \to T \to X \to F \to 0$ with $T \in \mathcal{T}$ and $F \in \mathcal{F}$.
\end{enumerate}
\end{definition}

If $(\mathcal{T}, \mathcal{F})$ is a torsion pair, $\Hom _{\mathcal{C}} (\mathcal{T}, F) = 0$ implies $F \in \mathcal{F}$, and $\Hom _{\mathcal{C}} (T, \mathcal{F}) = 0$ implies $T \in \mathcal{T}$.

Given $A = (B, C, M)$, the pair $(C \module, B \module)$ is a torsion pair such that both $\mathcal{T}$ and $\mathcal{F}$ are abelian categories. This property also characterizes triangular matrix algebras.

\begin{proposition}
Let $A$ be a finite dimensional algebra. Then it is isomorphic to a triangular matrix algebra if and only if $A \Module$ has a torsion pair $(\mathcal{T}, \mathcal{F})$ such that both $\mathcal{T}$ and $\mathcal{F}$ are nontrivial abelian categories.
\end{proposition}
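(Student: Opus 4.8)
The plan is to prove both implications by translating the two abelian subcategories into combinatorial data about the primitive idempotents of $A$, so that the triangular structure reads off directly.

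For the implication producing the torsion pair, given $A=(B,C,M)$ I will take $\mathcal{T}=\{V\in A\Module : e_BV=0\}$ and $\mathcal{F}=\{V\in A\Module : e_CV=0\}$. Restriction of scalars along the quotient maps $A\to A/Ae_BA\cong C$ and $A\to A/Ae_CA\cong B$ identifies $\mathcal{T}$ with $C\Module$ and $\mathcal{F}$ with $B\Module$, so both are abelian and, since $B$ and $C$ are nonzero, both are nontrivial. The two torsion-pair axioms are then immediate: any $\varphi\colon V\to W$ with $e_BV=0=e_CW$ vanishes because $V=e_CV$ while $\varphi(e_CV)\subseteq e_CW=0$; and for each $V$ the subset $e_CV$ is a submodule (as $e_BAe_C=0$) lying in $\mathcal{T}$, with $V/e_CV\in\mathcal{F}$, yielding $0\to e_CV\to V\to V/e_CV\to 0$.

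For the converse I would first upgrade ``abelian'' to ``Serre''. A torsion class is always closed under quotients and extensions; if it is moreover abelian, hence closed under kernels computed in $A\Module$, then, since the quotient of a torsion object by any subobject is again torsion, it is closed under subobjects, so it is a Serre subcategory. Dually $\mathcal{F}$, always closed under subobjects and extensions, becomes closed under quotients once it is abelian. Thus $\mathcal{T}$ and $\mathcal{F}$ are Serre subcategories with $\mathcal{T}\cap\mathcal{F}=0$. Now fix a complete set $e_1,\dots,e_n$ of primitive orthogonal idempotents of $A$, put $P_i=Ae_i$ and $S_i=\operatorname{top}(P_i)$, and examine the defining sequence $0\to T_i\to P_i\to F_i\to 0$ with $T_i\in\mathcal{T}$, $F_i\in\mathcal{F}$. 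If $P_i\notin\mathcal{T}$ then $F_i\neq 0$ is a quotient of $P_i$, so $\operatorname{top}(F_i)=S_i$ and hence $S_i\in\mathcal{F}$; and $P_i\in\mathcal{T}$ together with $S_i\in\mathcal{F}$ is impossible, because then $S_i$ is a torsion quotient, forcing $S_i\in\mathcal{T}\cap\mathcal{F}=0$. Hence $\{1,\dots,n\}=S\sqcup S'$ with $P_i\in\mathcal{T}$ for $i\in S$ and $S_j\in\mathcal{F}$ for $j\in S'$; nontriviality of $\mathcal{T}$ and $\mathcal{F}$, combined with the fact that a nonzero module over the Artinian ring $A$ has a simple submodule lying in whichever Serre class that module belongs to, forces both $S$ and $S'$ nonempty.

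Finally, with $f=\sum_{i\in S}e_i$ and $e=1-f=\sum_{j\in S'}e_j$, both nonzero, it remains to show $eAf=0$; then $A\cong\begin{bmatrix} eAe & 0\\ fAe & fAf\end{bmatrix}$ is a triangular matrix algebra with nonzero corner algebras $eAe$ and $fAf$, as required. Since $eAf=\bigoplus_{j\in S',\,i\in S}e_jAe_i$ and $e_jAe_i\cong\Hom_A(P_j,P_i)$, a nonzero homomorphism would have image $N\subseteq P_i$ with $\operatorname{top}(N)=S_j$; but $P_i\in\mathcal{T}$ and $\mathcal{T}$ Serre force $N\in\mathcal{T}$, hence $S_j\in\mathcal{T}$, contradicting $S_j\in\mathcal{F}$. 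I expect the only genuinely delicate point to be the passage from ``$\mathcal{T},\mathcal{F}$ abelian'' to ``$\mathcal{T},\mathcal{F}$ Serre''; once that is in place the rest is a short bookkeeping with primitive idempotents, and the forward direction is essentially the observation recorded just before the statement.
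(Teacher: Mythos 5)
Your argument is correct, and its converse direction takes a genuinely different route from the paper's. The paper first proves $\Hom(\mathcal{F},\mathcal{T})=0$ and that torsion and torsion-free modules share no composition factors, then analyzes the canonical sequence $0\to T_0\to A\to F_0\to 0$: it shows $F_0$ is a compact projective generator of $\mathcal{F}$ (so $\mathcal{F}\simeq B\module$ with $B=\End_A(F_0)^{\op}$ by Morita theory), takes a projective cover $P$ of $F_0$ with complement $Q\in\mathcal{T}$, shows $Q$ is a projective generator of $\mathcal{T}$ (so $\mathcal{T}\simeq C\module$ with $C=\End_A(Q)^{\op}$), and only at the end deduces $\Hom_A(P,Q)=0$. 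You instead upgrade $\mathcal{T}$ and $\mathcal{F}$ to Serre subcategories, sort the primitive idempotents according to whether $Ae_i$ is torsion, and kill the Peirce component $eAf\cong\Hom_A(Ae,Af)$ directly via the top-of-a-projective argument; this is more elementary, bypasses Morita theory entirely, and your explicit Serre-closure lemma replaces the paper's ad hoc image and composition-factor arguments, which tacitly rely on the same closure properties. What the paper's longer route buys is the additional identification of the torsion pair with the module categories of the two corner algebras, the form in which the result is used elsewhere in the paper; what your route buys is brevity and a clean reduction to the single statement $eAf=0$. The one step you should justify rather than assert is ``abelian, hence closed under kernels computed in $A\Module$'': a priori an abelian full subcategory could have its own kernels, but for a torsion class the kernel of a morphism is the torsion part of the ambient kernel, and the coimage-to-image comparison then forces closure under ambient kernels whenever the class is abelian (dually for $\mathcal{F}$ and cokernels); the paper reads ``abelian'' in exactly this way without comment, so adding that one line would make your version strictly more careful. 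Your explicit verification of the forward direction, which the paper dismisses as obvious, is also correct.
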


\begin{proof}
One direction is obvious. For the other direction, suppose that both $\mathcal{T}$ and $\mathcal{F}$ are abelian categories. We claim $\Hom(\mathcal{F}, \mathcal{T}) = 0$. Indeed, for $F \in \mathcal{F}$ and $T \in \mathcal{T}$, if there is some $0 \neq \alpha: F \rightarrow T$, then $\alpha (F)$ is a submodule of $T$ as well as a quotient module of $F$. Note that $\mathcal{T}$ is closed under taking quotients and $\mathcal{F}$ is closed under taking submodules. Therefore, the kernel of $\alpha$ is contained in $\mathcal{F}$. But $\mathcal{F}$ is abelian, so $\alpha(F)$ is also contained in $\mathcal{F}$. Similarly, by considealgebra the cokernel of $\alpha$ we deduce $\alpha(F) \in \mathcal{T}$. This forces $0 \neq \alpha(F) \in \mathcal{F} \cap \mathcal{T}$, which is impossible. Actually, an even stronger result holds. That is, any $T \in \mathcal{T}$ and $F \in \mathcal{F}$ cannot have common composition factors since otherwise this common composition factor must be contained in both $\mathcal{T}$ and $\mathcal{F}$, which is impossible.

Consider the short exact sequence
\begin{equation*}
\xymatrix{0 \ar[r] & T_0 \ar[r] & A \ar[r] & F_0 \ar[r] & 0, & (\ast)}
\end{equation*}
where $T_0 \in \mathcal{T}$ and $F_0 \in \mathcal{F}$. We claim that $F_0$ is a compact projective generator of $\mathcal{F}$. Indeed, for every $F \in \mathcal{F}$, applying $\Hom_A (-, F)$ we get
\begin{equation*}
0 \rightarrow \Hom_A (F_0, F) \rightarrow \Hom_A (A, F) \rightarrow \Hom_A(T_0, F) \rightarrow \Ext_A^1 (F_0, F) \rightarrow 0.
\end{equation*}
Note that $\Hom_A(T_0, F) = 0$ implies $\Ext_A^1 (F_0, F) = 0$, so $F_0$ is projective in $\mathcal{F}$. Moreover, $\Hom_A (F_0, F) \cong \Hom_A (A, F)$. In particular, an epimorphism $A^{\oplus n} \rightarrow F$ factors through a map $(F_0) ^{\oplus n} \rightarrow F$, which must be surjective as well. Therefore, $F_0$ is a compact projective generator of $\mathcal{F}$. Let $B = \End_A (F_0)^{\op}$. By Morita theory, $\mathcal{F}$ is equivalent to $B \module$.

The fact $\Hom_A (\mathcal{F}, \mathcal{T}) = 0 = \Hom_A (\mathcal{T}, \mathcal{F})$ implies that $A$ is not contained in either $\mathcal{F}$ or $\mathcal{T}$ since otherwise either $\mathcal{T} = 0$ or $\mathcal{F} = 0$. Let $P$ be a projective cover of $F_0$ in $A \module$. We claim that $P$ is a proper summand of $A$. Indeed, if $P = A$, then the top of $F_0$ contains all simple $A$-modules (up to isomorphism). Consequently, $\mathcal{F} = 0$. This is impossible.

Choose a decomposition $A = P \oplus Q$ with $Q \neq 0$. The short exact sequence $(\ast)$ gives rise to a commutative diagram:
\begin{equation*}
\xymatrix{0 \ar[r] & K \ar[r] \ar[d] & P \ar[r] \ar[d] & F_0 \ar[r] \ar@{=}[d] & 0\\
0 \ar[r] & T_0 \ar[r] \ar[d] & A \ar[r] \ar[d] & F_0 \ar[r] & 0\\
 & Q \ar@{=}[r] & Q & &}
\end{equation*}
Therefore, $Q \in \mathcal{T}$ since it is a summand of $T_0$. Actually, $Q$ is a projective generator of $\mathcal{T}$. Indeed, for any $T \in \mathcal{T}$, $T$ and $F_0$ cannot have common composition factors. But the top of $F_0$ is isomorphic to the top of $P$. Therefore, $\Hom_A (P, T) = 0$. In other words, $T$ can be generated by $Q$. Again, by the Morita theory, $\mathcal{T}$ is equivalent to $C \module$, where $C = \End_A (Q)^{\op}$.

We finish the proof by showing $\Hom_A (P, Q) = 0$. Since $Q \in \mathcal{T}$ and $F_0 \in \mathcal{F}$, $Q$ and $F_0$ do not have common composition factors. The conclusion comes from the fact that $P$ and $F_0$ have the same top up to isomorphism.
\end{proof}

Now we turn to derived categories. \textit{Recollements} of triangulated categories are defined in \cite{CX, HKL1, HKL2}.

\begin{definition}
A recollement of a triangulated category $\mathcal{C}$ by triangulated categories $\mathcal{D}$ and $\mathcal{E}$ is expressed diagrammatically as follows
\begin{equation*}
\xymatrix{\mathcal{D} \ar[rr] ^{i_{\ast}} & & \mathcal{C} \ar[rr] ^{j^{\ast}} \ar@/_1.5pc/[ll] _{i^{\ast}} \ar@/^1.5pc/[ll] _{i^!} & & \mathcal{E} \ar@/^1.5pc/[ll] _{j_{\ast}} \ar@/_1.5pc/[ll] _{j_!}}
\end{equation*}
with six additive functors $i^{\ast}, i_{\ast}, i_!, j^!, j^{\ast}, j_{\ast}$ satisfying the following conditions:
\begin{enumerate}
\item $(i^{\ast}, i_{\ast}, i^!)$, and $(j_!, j^{\ast}, j_{\ast})$ both are adjoint triples;
\item $i_{\ast}, j_!$ and $j_{\ast}$ are fully faithful;
\item $i^! j_{\ast} = 0$;
\item for each $X \in \mathcal{C}$, there are triangles
\begin{align*}
\xymatrix{i_{\ast}i^! (X) \ar[r] & X \ar[r] & j_{\ast} j^{\ast} (X) \ar[r] & i_{\ast}i^! (X)[1]},\\
\xymatrix{j_!j^{\ast} (X) \ar[r] & X \ar[r] & i_{\ast} i^{\ast} (X) \ar[r] & j_!j^{\ast} (X)[1].}
\end{align*}
\end{enumerate}
\end{definition}

Functors appearing in the recollement have very special properties. For example, $i_{\ast}$, $j_!$, and $j_{\ast}$ are fully faithful; the composites $j^{\ast} i_{\ast}$ and $i^{\ast} j^!$ are 0; furthermore, the composites $i^{\ast} i_{\ast}$, $i^! i_{\ast}$, $j^{\ast} j_{\ast}$, and $j^{\ast} j_!$ are isomorphic to the corresponding identity functors.

The following criteria for existence of recollements of derived categories are described by H\"{u}gel, Koenig, and Liu in \cite{HKL1,HKL2,Koenig}, and by Nicol\'{a}s and Saor\'{\i}n in \cite{NS,Nicolas}.

\begin{theorem}
[\cite{Koenig, NS, Nicolas}] The derived category $D(\Lambda)$ of an algebra $\Lambda$ is a recollement of derived categories of algebras $R$ and $S$ if and only if there are objects $T_1, T_2 \in D(\Lambda)$ satisfying:
\begin{enumerate}
\item $T_1$ is compact (i.e., quasi-isomorphic to an object in $K^b(_{\Lambda} P)$), and exceptional (i.e., $\Hom _{D(\Lambda)} (T_1, T_1[n]) = 0$ for $n \neq 0$).
\item $\Hom _{D(\Lambda)} (T_2, T_2^I [n]) = 0$ for all index sets $I$ and $n \neq 0$;
\item $\Hom _{D(\Lambda)} (T_1, T_2[n]) = 0$ for all $n \in \mathbb{Z}$;
\item $T_1 \oplus T_2$ generates $D(\Lambda)$ as a triangulated category. \footnote{Here $T_1 \oplus T_2$ generates $D(\Lambda)$ if and only for $X \in D(\Lambda), \Hom_{D(\Lambda)} (T_1 \oplus T_2, X[n]) = 0$ for all $n \in \mathbb{Z}$ implies $X = 0$. But in general $D(\Lambda) \neq \Tria (T_1 \oplus T_2)$, which is the smallest triangulated category containing $T_1 \oplus T_2$.}
\end{enumerate}
Moreover, we have $\End _{D(\Lambda)} (T_1)^{\op} \cong S$ and $\End _{D(\Lambda)} (T_2)^{\op} \cong R$.
\end{theorem}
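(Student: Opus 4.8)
The plan is to establish the two implications separately. Throughout, by a \emph{recollement of $D(\Lambda)$ by $D(R)$ and $D(S)$} I mean a recollement of triangulated categories (in the sense recalled above) with six functors $i^{\ast}, i_{\ast}, i^!, j_!, j^{\ast}, j_{\ast}$ in which $D(R)$ occupies the $i_{\ast}$-side and $D(S)$ the $j$-side; the object $T_1$ of the statement will be read off the $j$-side and $T_2$ off the $i_{\ast}$-side.

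First I would treat the forward direction. Given such a recollement, set $T_1 := j_!(S)$ and $T_2 := i_{\ast}(R)$, where $S$ and $R$ are the regular modules viewed as stalk complexes in degree $0$, and check conditions (1)--(4) by formal adjunction calculus. Since $j^{\ast}$ has both a left and a right adjoint it preserves coproducts, so its left adjoint $j_!$ preserves compactness and $T_1$ is compact; exceptionality of $T_1$ and $\End_{D(\Lambda)}(T_1)^{\op}\cong S$ follow from $j^{\ast}j_!\cong\mathrm{Id}$ and the full faithfulness of $j_!$. Since $i_{\ast}$ has a left adjoint it preserves products, so $(T_2)^I\cong i_{\ast}(R^I)$ and $\Hom_{D(\Lambda)}(T_2,(T_2)^I[n])\cong\Hom_{D(R)}(R,R^I[n])=\prod_I\Hom_{D(R)}(R,R[n])=0$ for $n\neq 0$, and likewise $\End_{D(\Lambda)}(T_2)^{\op}\cong R$. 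Condition (3) is immediate from $j^{\ast}i_{\ast}=0$, as $\Hom_{D(\Lambda)}(T_1,T_2[n])\cong\Hom_{D(S)}(S,j^{\ast}i_{\ast}(R)[n])$. For (4), if $\Hom_{D(\Lambda)}(T_1\oplus T_2,X[n])=0$ for all $n$, the vanishing against $T_1$ forces $j^{\ast}X=0$, hence $X\cong i_{\ast}Y$ because $\ker j^{\ast}=\mathrm{Im}\,i_{\ast}$, and then the vanishing against $T_2$ together with the full faithfulness of $i_{\ast}$ gives $\Hom_{D(R)}(R,Y[n])=0$ for all $n$, so $Y=0$ and $X=0$. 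This direction is essentially bookkeeping with adjoints.

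For the converse I would argue as follows. Put $\mathcal{U}:=\Tria(T_1)$, the localizing subcategory of $D(\Lambda)$ generated by $T_1$. As $T_1$ is compact in the compactly generated category $D(\Lambda)$, $\mathcal{U}$ is compactly generated with compact exceptional generator $T_1$, and the inclusion $\mathcal{U}\hookrightarrow D(\Lambda)$ then underlies a recollement of $D(\Lambda)$ by $\mathcal{U}$ (on the $i_{\ast}$-side) and the Verdier quotient $\mathcal{Q}:=D(\Lambda)/\mathcal{U}$ (on the $j$-side); this is the ``smashing'' recollement associated with a compact object, which I would quote from \cite{Koenig, NS, Nicolas}. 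The derived Morita theorem applied to the compact exceptional generator $T_1$ gives a triangle equivalence $\mathcal{U}\simeq D(\End_{D(\Lambda)}(T_1)^{\op})=D(S)$. To identify $\mathcal{Q}$, note that $q$ restricts to an equivalence from $\mathcal{U}^{\perp}=\{X : \Hom_{D(\Lambda)}(T_1,X[n])=0\text{ for all }n\}$ onto $\mathcal{Q}$, with quasi-inverse its fully faithful right adjoint; now $\mathcal{U}^{\perp}$ contains $T_2$ by condition (3), is closed under products in $D(\Lambda)$ precisely because $T_1$ is compact, and inherits its $\Hom$-spaces from $D(\Lambda)$, so condition (2) transports verbatim to $q(T_2)$. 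Moreover $q(T_2)$ generates $\mathcal{Q}$: for $X\in\mathcal{U}^{\perp}$, vanishing of $\Hom_{\mathcal{Q}}(qT_2,qX[n])=\Hom_{D(\Lambda)}(T_2,X[n])$ for all $n$, combined with $\Hom_{D(\Lambda)}(T_1,X[n])=0$, forces $X=0$ since $T_1\oplus T_2$ generates $D(\Lambda)$. Then the non-compact recognition theorem of \cite{NS, Nicolas} (a generator $U$ of a triangulated category with coproducts satisfying $\Hom(U,U^I[n])=0$ for every index set $I$ and every $n\neq 0$ yields $\mathcal{Q}\simeq D(\End(U)^{\op})$) identifies $\mathcal{Q}$ with $D(\End_{D(\Lambda)}(T_2)^{\op})=D(R)$. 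Assembling these pieces produces the recollement and the asserted endomorphism-algebra identifications.

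The two steps I expect to be the real obstacle, and which I would cite rather than reprove, are: that a localizing subcategory generated by an object compact in the ambient category is automatically smashing, so that a Bousfield localization upgrades to a full recollement (both the inclusion and the quotient functor acquiring the missing adjoint); and the non-compact form of the derived Morita theorem, where hypothesis (2) with arbitrary (not merely finite) coproducts $T_2^I$ plays the role that compactness plays for $T_1$. The one point that still requires care is that hypothesis (2) genuinely survives the passage from $D(\Lambda)$ to $\mathcal{Q}$; the observation that $\mathcal{U}^{\perp}$ is closed under products because $T_1$ is compact is exactly what makes this work.
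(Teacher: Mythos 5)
The paper itself does not prove this statement: it is quoted, with attribution, from \cite{Koenig, NS, Nicolas}, so there is no internal argument to compare yours against; your outline has to be judged against the argument in those sources. Your forward direction is fine and is the standard adjunction bookkeeping with $T_1=j_!(S)$ and $T_2=i_{\ast}(R)$.

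In the converse there is one concrete error and one substantive delegation. The error is the orientation of the recollement attached to the compact exceptional object: with $\mathcal{U}=\Tria(T_1)$, the TTF triple produced by compactness of $T_1$ is $(\mathcal{U},\,\mathcal{U}^{\perp},\,\mathcal{U}^{\perp\perp})$, so $\mathcal{U}$ is the essential image of $j_!$ (hence equivalent to the $j$-side), while $\ker j^{\ast}=\mathcal{U}^{\perp}\simeq D(\Lambda)/\mathcal{U}$ is the image of $i_{\ast}$; compare the idempotent case, where $\Tria(\Lambda e)$ is the image of $j_!=\Lambda e\otimes^{L}_{e\Lambda e}-$ and $\Tria(\Lambda e)^{\perp}$ is the $i_{\ast}$-side. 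Your claim that $\mathcal{U}$ sits in the $i_{\ast}$ position asserts in addition that $\mathcal{U}$ is the middle class of a TTF triple (reflective, closed under products), which compactness of $T_1$ does not give and which fails in general; it also contradicts the convention of your own forward direction. This is repairable --- your identifications $\mathcal{U}\simeq D(S)$ and $\mathcal{U}^{\perp}\simeq D(R)$ are exactly the ones needed for the correctly oriented recollement --- but as written the recollement you assemble need not exist. A related slip: $\mathcal{U}^{\perp}$ is closed under products automatically (Hom into a product), whereas compactness of $T_1$ is what yields closure under coproducts; you have the two reversed. The delegation: identifying $\mathcal{U}^{\perp}$ with $D(\End_{D(\Lambda)}(T_2)^{\op})$ is precisely the hard half of the theorem, and the ``non-compact recognition theorem'' you invoke (a generator $U$ with $\Hom(U,U^{I}[n])=0$ for $n\neq 0$ forces the ambient category to be $D(\End(U)^{\op})$) is not an off-the-shelf statement independent of the result being proved; for a non-compact $U$ one needs, in effect, a self-compactness property of $U$ inside $\mathcal{U}^{\perp}$, and showing that conditions (2)--(4) supply it is where the actual content of \cite{NS, Nicolas} lies. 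So your sketch has the right skeleton, but at the decisive step it cites what amounts to the theorem itself.
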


\begin{proposition}
Let $A = (B, C, M)$. Then the recollements (3.1) and (3.2) give rise to a recollement of $D(A)$.
\end{proposition}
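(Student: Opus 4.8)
The plan is to realize each of the two recollements of $D(A)$ by applying Theorem 3.11 to a suitable pair $T_{1},T_{2}$, in each case with $T_{1}$ one of the block projectives $Ae_{B}$, $Ae_{C}$. Two elementary observations about $A=(B,C,M)$ will be used throughout. First, a left $A$-module is nothing but a triple $(U,V,\phi)$ with $U\in B\Module$, $V\in C\Module$ and $\phi\colon M\otimes_{B}U\to V$ a $C$-homomorphism, with $e_{B}(U,V,\phi)=U$ and $e_{C}(U,V,\phi)=V$; since $e_{B}Ae_{C}=0$, for any complex $X$ of $A$-modules the underlying complex of $k$-spaces splits as $X\cong e_{B}X\oplus e_{C}X$, so $X$ is acyclic if and only if both $e_{B}X$ and $e_{C}X$ are. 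Second, let $\kappa\colon C\Module\to A\Module$, $V\mapsto(0,V,0)$, be inflation along $A\twoheadrightarrow A/Ae_{B}A\cong C$: then $\kappa$ is exact, $\kappa({}_{C}C)\cong Ae_{C}$, $\kappa$ sends $\add({}_{C}C)$ into $\add(Ae_{C})$, and $(\kappa,\,e_{C}\cdot-)$ is an adjoint pair of exact functors, hence induces an adjoint pair between $D(C)$ and $D(A)$. Dually, $\iota_{\ast}\colon B\Module\to A\Module$, $U\mapsto(U,0,0)$, is inflation along $A\twoheadrightarrow A/Ae_{C}A\cong B$; it is exact, fully faithful, and a right adjoint (so preserves products), and $\iota_{\ast}({}_{B}B)\cong A/Ae_{C}A$ has a projective presentation $0\to\kappa({}_{C}M)\to Ae_{B}\to\iota_{\ast}({}_{B}B)\to0$.

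For (3.1) I would take $T_{1}=j_{!}({}_{B}B)=Ae_{B}$ and $T_{2}=i_{\ast}({}_{C}C)=Ae_{C}$, both finitely generated projective $A$-modules. Then $T_{1}$ is compact and exceptional, and $\Hom_{D(A)}(Ae_{C},(Ae_{C})^{I}[n])=\Ext_{A}^{n}(Ae_{C},(Ae_{C})^{I})=0$ for every index set $I$ and $n\neq0$ because $Ae_{C}$ is projective; this is conditions (1) and (2). Condition (3) is $\Hom_{D(A)}(Ae_{B},Ae_{C}[n])=0$ for all $n$, which holds since $\Ext_{A}^{n}(Ae_{B},Ae_{C})=0$ for $n\neq0$ by projectivity and $\Hom_{A}(Ae_{B},Ae_{C})=e_{B}Ae_{C}=0$ by triangularity. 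Condition (4) is immediate: $T_{1}\oplus T_{2}\cong A$, and $\Hom_{D(A)}(A,X[n])=H^{n}(X)$, so the generation hypothesis forces $X\cong0$. Since $\End_{A}(Ae_{B})^{\op}\cong B$ and $\End_{A}(Ae_{C})^{\op}\cong C$, Theorem 3.11 delivers a recollement of $D(A)$ by $D(C)$ and $D(B)$; a short check identifies its six functors with the total derived functors of those occurring in (3.1).

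For (3.2) I would take $T_{1}=\tau_{!}({}_{C}C)=Ae_{C}$ and $T_{2}=\iota_{\ast}({}_{B}B)$. Condition (1) holds because $Ae_{C}$ is projective, and condition (3), $\Hom_{D(A)}(Ae_{C},\iota_{\ast}({}_{B}B)[n])=0$ for all $n$, holds because $\Ext_{A}^{n}(Ae_{C},-)=0$ for $n\neq0$ and $\Hom_{A}(Ae_{C},\iota_{\ast}({}_{B}B))=e_{C}\cdot\iota_{\ast}({}_{B}B)=0$. For condition (2) I would splice the presentation of $\iota_{\ast}({}_{B}B)$ above with $\kappa$ applied to a projective resolution $P_{\bullet}\to{}_{C}M$ over $C$; the resulting projective $A$-resolution of $\iota_{\ast}({}_{B}B)$ has all its terms beyond $Ae_{B}$ in $\add(Ae_{C})$, so, applying $\Hom_{A}(-,\iota_{\ast}({}_{B}B)^{I})=\Hom_{A}(-,\iota_{\ast}({}_{B}B^{I}))$ (using that $\iota_{\ast}$ preserves products), those terms are annihilated because $e_{C}\cdot\iota_{\ast}({}_{B}B^{I})=0$, and only $e_{B}\cdot\iota_{\ast}({}_{B}B^{I})=B^{I}$ in degree $0$ survives; hence $\Ext_{A}^{n}(\iota_{\ast}({}_{B}B),\iota_{\ast}({}_{B}B)^{I})=0$ for $n\neq0$.

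The step I expect to need genuine care is condition (4) for (3.2), namely that $Ae_{C}\oplus\iota_{\ast}({}_{B}B)$ generates $D(A)$. Given $X\in D(A)$ with $\Hom_{D(A)}(Ae_{C}\oplus\iota_{\ast}({}_{B}B),X[n])=0$ for all $n$, the vanishing of $H^{n}(e_{C}X)=\Hom_{D(A)}(Ae_{C},X[n])$ shows $e_{C}X$ is acyclic; then the derived adjunction $(\kappa,\,e_{C}\cdot-)$ gives $\Hom_{D(A)}(\kappa(Z),X[n])\cong\Hom_{D(C)}(Z,(e_{C}X)[n])=0$ for all $Z\in D(C)$ and all $n$, in particular $\Hom_{D(A)}(\kappa({}_{C}M),X[n])=0$. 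Applying $\Hom_{D(A)}(-,X)$ to the triangle $\kappa({}_{C}M)\to Ae_{B}\to\iota_{\ast}({}_{B}B)\to\kappa({}_{C}M)[1]$ and using that its two outer Hom-groups vanish forces $H^{n}(e_{B}X)=\Hom_{D(A)}(Ae_{B},X[n])=0$ for all $n$, so $e_{B}X$ is acyclic as well. Since $X\cong e_{B}X\oplus e_{C}X$ as complexes of $k$-spaces --- which is exactly where the triangularity $e_{B}Ae_{C}=0$ enters --- the complex $X$ is acyclic, i.e. $X\cong0$ in $D(A)$. With $\End_{A}(Ae_{C})^{\op}\cong C$ and $\End_{A}(\iota_{\ast}({}_{B}B))^{\op}\cong\End_{B}({}_{B}B)^{\op}\cong B$, Theorem 3.11 then produces a recollement of $D(A)$ by $D(B)$ and $D(C)$ lifting (3.2). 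Both recollements can alternatively be read off from the classical theory of stratifying ideals, since here $e_{B}A\cong{}_{B}B$ and $Ae_{C}\cong C_{C}$ are free over the respective corner algebras.
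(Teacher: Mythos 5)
Your proposal is correct and follows essentially the same route as the paper: the paper's proof also invokes the recollement criterion (Theorem 3.8 here, your ``Theorem 3.11'') with exactly your choices $T_1=Ae_B$, $T_2=Ae_C$ for (3.1) and $T_1=Ae_C$, $T_2={}_AB$ for (3.2). The only difference is that the paper leaves the verification of the four conditions to the reader, whereas you write them out, including the one genuinely non-obvious point (generation by $Ae_C\oplus{}_AB$), and your verifications are sound.
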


\begin{proof}
This is a direct application of the previous theorem. For the first recollement, we let $T_1 = Ae_B$ and $T_2 = Ae_C$, both of which are compact objects in $D(A)$. For the second recollement, we can let $T_1 = Ae_C$ and $T_2 = B \cong A / (Ae_C \oplus M)$.
\end{proof}

\section{Glue tilting objects}

In this section we construct tilting objects for $A = (B, C, M)$ using the special recollements described in the previous section. Recall that an object $T \in D^b(A)$ is called a \emph{tilting object} if it is compact, exceptional, and satisfies $\Tria (T) = K^b( _AP)$. Tilting modules are special examples of tilting objects.

\begin{lemma}
Let $\Lambda$ be a finite dimensional algebra and suppose that $D^b(\Lambda)$ has the following recollement
\begin{equation}
\xymatrix{D^b(S) \ar[rr] ^{i_{\ast}} & & D^b(\Lambda) \ar[rr] ^{j^{\ast}} \ar@/_1.5pc/[ll] _{i^{\ast}} \ar@/^1.5pc/[ll] _{i^!} & & D^b(R) \ar@/^1.5pc/[ll] _{j_{\ast}} \ar@/_1.5pc/[ll] _{j_!}}.
\end{equation}
We have:
\begin{enumerate}
\item If $X \in D^b(\Lambda)$ is compact, so are $i^{\ast} (X)$ and $j^{\ast} (X)$.
\item If $j_{\ast} (Z) \in D^b (\Lambda)$ (or $j_! (Z) \in D^b(\Lambda)$) is compact, so is $Z \in D^b(R)$.
\item $Y \in D^b(S)$ is exceptional if and only if so is $i_{\ast} (Y)$.
\item $Z \in D^b(R)$ is exceptional if and only if so is $j_!(Z)$.
\item $Z \in D^b(R)$ is exceptional if and only if so is $j_{\ast} (Z)$.
\end{enumerate}
\end{lemma}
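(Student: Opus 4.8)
The plan is to read all five assertions off the structure of a recollement recorded just before the lemma: the adjunctions $(i^{\ast}, i_{\ast}, i^!)$ and $(j_!, j^{\ast}, j_{\ast})$, the full faithfulness of $i_{\ast}$, $j_!$, $j_{\ast}$, and the identities $i^{\ast}i_{\ast} \cong \mathrm{Id}$ and $j^{\ast}j_! \cong \mathrm{Id} \cong j^{\ast}j_{\ast}$. Recall that ``compact'' means ``isomorphic in $D^b(\Lambda)$ to an object of $K^b(_{\Lambda}P)$'', equivalently lying in the thick subcategory generated by the regular module.

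Parts (3), (4) and (5) are formal. Let $F$ be any of the fully faithful triangulated functors $i_{\ast}$, $j_!$, $j_{\ast}$ and $W$ an object of its source category. Then $\Hom_{D^b(\Lambda)}(F(W), F(W)[n]) \cong \Hom(W, W[n])$ for every $n \in \mathbb{Z}$, and since being exceptional only involves these groups, $W$ is exceptional if and only if $F(W)$ is.

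For (1): since $i^{\ast}$ and $j^{\ast}$ are triangulated, they send $K^b(_{\Lambda}P)$, the thick subcategory generated by the regular module, into the thick subcategories generated by $i^{\ast}(\Lambda)$ and $j^{\ast}(\Lambda)$; so it suffices to check that $i^{\ast}(\Lambda)$ is perfect over $S$ and $j^{\ast}(\Lambda)$ is perfect over $R$. For $i^{\ast}$ this also follows abstractly, since $i^{\ast}$ is left adjoint to $i_{\ast}$ and $i_{\ast}$ preserves coproducts (it has the further right adjoint $i^!$); for the recollements we use, $i^{\ast}(\Lambda)$ is in fact projective, e.g.\ $i^{\ast}(A) \cong C$ for the recollement coming from (3.1). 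For $j^{\ast}$ there is no such abstract shortcut in general, but for our recollements it is a direct computation: from (3.1), $j^{\ast}(A) = e_B A \cong {}_{B}B$ is free; from (3.2), the corresponding functor $\tau^{\ast}$ satisfies $\tau^{\ast}(A) = e_C A \cong {}_{C}M \oplus {}_{C}C$, which is perfect over $C$ exactly when $_{C}M$ has finite projective dimension. I expect the compactness of $j^{\ast}(X)$ to be the only step that is not pure diagram-chasing.

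Finally, (2) follows from (1): if $j_!(Z)$ (resp.\ $j_{\ast}(Z)$) is compact in $D^b(\Lambda)$, then applying the compactness-preserving functor $j^{\ast}$ and using $j^{\ast}j_! \cong \mathrm{Id}$ (resp.\ $j^{\ast}j_{\ast} \cong \mathrm{Id}$) shows that $Z \cong j^{\ast}(j_!(Z))$ (resp.\ $Z \cong j^{\ast}(j_{\ast}(Z))$) is compact. So the whole lemma rests on the single non-formal input that $j^{\ast}$ preserves compact objects, and that is the step I would expect to demand care.
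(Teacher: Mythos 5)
Parts (2)--(5) of your plan are fine and agree with the paper's argument: (3)--(5) are exactly the full-faithfulness/adjunction isomorphisms $\Hom_{D^b(\Lambda)}(F(W),F(W)[n]) \cong \Hom(W,W[n])$ for $F = i_{\ast}, j_!, j_{\ast}$, and (2) is deduced from (1) via $j^{\ast}j_! \cong \mathrm{Id} \cong j^{\ast}j_{\ast}$, just as in the paper.

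The gap is in part (1). The lemma is asserted for an \emph{arbitrary} recollement of $D^b(\Lambda)$ by $D^b(S)$ and $D^b(R)$, but after the (correct) reduction to showing that $i^{\ast}(\Lambda)$ and $j^{\ast}(\Lambda)$ are perfect, you only verify this for the two specific recollements attached to a triangular matrix algebra, and for the one coming from (3.2) your own computation ($\tau^{\ast}(A) \cong {}_CM \oplus {}_CC$) forces an extra hypothesis, $\pd_CM < \infty$, which is not in the statement. Moreover, the abstract shortcut you invoke for $i^{\ast}$ (a left adjoint of a coproduct-preserving functor preserves compacts) belongs to compactly generated, i.e.\ unbounded, derived categories; in $D^b(\Lambda)$ arbitrary coproducts do not exist and ``compact'' here means \emph{perfect}, so that argument is not available verbatim either. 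The paper closes exactly this hole with an intrinsic criterion (Lemma 2.3 of \cite{HKL2}): an object $X \in D^b(\Lambda)$ of a finite dimensional algebra is compact if and only if for every $Y \in D^b(\Lambda)$ one has $\Hom_{D^b(\Lambda)}(X,Y[n]) = 0$ for all $n \gg 0$. Granting this, compactness of $i^{\ast}(X)$ and $j^{\ast}(X)$ is purely formal: for $Y \in D^b(S)$ and $Z \in D^b(R)$ the adjunctions give $\Hom(i^{\ast}(X),Y[n]) \cong \Hom(X,i_{\ast}(Y)[n])$ and $\Hom(j^{\ast}(X),Z[n]) \cong \Hom(X,j_{\ast}(Z)[n])$, and these vanish for $n \gg 0$ because $i_{\ast}(Y), j_{\ast}(Z) \in D^b(\Lambda)$ and $X$ is compact; no computation of $i^{\ast}(\Lambda)$ or $j^{\ast}(\Lambda)$ and no extra hypothesis is needed. (Incidentally, when a recollement of the bounded derived categories of type (3.2) exists with the standard functors, $\pd_CM < \infty$ is automatic, e.g.\ because $j_{\ast} = R\Hom_C(e_CA,-)$ must take values in $D^b(A)$; so the condition you add signals failure of the lemma's hypothesis rather than a missing assumption in its statement.) If you wish to keep your reduction to $j^{\ast}(\Lambda)$, you should justify its perfection in general by this Hom-vanishing criterion rather than by case-by-case computation.
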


\begin{proof}
(1): By Lemma 2.3 in \cite{HKL2}, $X \in D^b (\Lambda)$ is compact if and only if for any $Y \in D^b(\Lambda)$ there is some $n_0 \geqslant 0$ (depending on $Y$) such that for all $n \geqslant n_0$, we have $\Hom _{D^b(\Lambda)} (X, Y[n]) = 0$. Note that all functors commute with the degree shift functor. When $n$ is sufficiently large, by adjunction,
\begin{equation*}
\Hom _{D^b(S)} (i^{\ast} (X), Y[n]) \cong \Hom _{D^b(\Lambda)} (X, i_{\ast} (Y[n])) = \Hom _{D^b(\Lambda)} (X, i_{\ast} (Y)[n]) = 0
\end{equation*}
for any $Y \in D^b(S)$; and
\begin{equation*}
\Hom _{D^b(R)} (j^{\ast} (X), Z[n]) \cong \Hom _{D^b(\Lambda)} (X, j_{\ast} (Z[n])) = \Hom _{D^b(\Lambda)} (X, j_{\ast} (Z)[n]) = 0
\end{equation*}
for any $Z \in D^b(R)$. Therefore, these two objects are compact.

(2): One uses the isomorphisms $j^{\ast} j_! (Z) \cong Z \cong j^{\ast} j_{\ast} (Z)$ and the conclusion that $j^{\ast}$ preserves compact objects proved in (1).

(3): For $n \neq 0$, by adjunctions one has:
\begin{equation*}
\Hom _{D^b(\Lambda)} (i_{\ast} (Y), i_{\ast} (Y)[n]) \cong \Hom _{D^b(S)} (i^{\ast} i_{\ast} (Y), Y[n]) \cong \Hom _{D^b(S)} (Y, Y[n]).
\end{equation*}
Therefore, $Y$ is exceptional if and only if the right side is 0 for $n \neq 0$; if and only if so is the left side. That is, $i_{\ast} (Y)$ is exceptional.

(4): This can be proved by the adjunction
\begin{equation*}
\Hom _{D^b(\Lambda)} (j_{\ast} (Z), j_{\ast} (Z)[n]) \cong \Hom _{D^b(R)} (j^{\ast} j_{\ast} (Z), Z[n]) \cong \Hom _{D^b(R)} (Z, Z[n]).
\end{equation*}

(5): This can be proved by the adjunction
\begin{equation*}
\Hom _{D^b(\Lambda)} (j_! (Z), j_! (Z)[n]) \cong \Hom _{D^b(R)} (Z, j^{\ast} j_!(Z)[n]) \cong \Hom _{D^b(R)} (Z, Z[n]).
\end{equation*}
\end{proof}

For a triangular matrix algebra $A = (B, C, M)$, we have an explicit description for algebras and functors in the recollement (4.1). That is,
\begin{align*}
& R = B, \quad S = C;\\
& i^{\ast} = A/Ae_BA \otimes_A^L -,\\
& i^! = R \Hom_A (C, -),\\
& j^{\ast} = R \Hom_A (Ae_B, -),\\
& j_! = Ae_B \otimes _B^L -,\\
& j_{\ast} = R\Hom_B (e_BA,-).
\end{align*}
Note that $j_{\ast}$ and $i_{\ast}$ are inclusion functors. These functors preserve extra special properties of objects, as claimed in the following lemma.

\begin{lemma}
Let $A = (B, C, M)$. Given $Z \in D^b(B)$ and $Y \in D^b(C)$, one has
\begin{enumerate}
\item $Y \in D^b(C)$ is compact if and only if so is $i_{\ast} (Y) \in D^b(A)$.
\item $Z \in D^b(B)$ is compact if and only if so is $j_! (Z) \in D^b(A)$.
\item If $Z$ is compact in $D^b(B)$ and $_CM$ has finite projective dimension, then $j_{\ast} (Z) \in D^b(A)$ is compact.
\end{enumerate}
\end{lemma}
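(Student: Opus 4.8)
The plan is to treat the three statements separately, using throughout the explicit descriptions of $i_{\ast}$, $j_!$ and $j_{\ast}$ recorded above and the fact that an object of a bounded derived category is compact precisely when it is isomorphic there to a bounded complex of finitely generated projective modules. One half of (1) and of (2) is already available: if $i_{\ast}(Y)$ is compact then $Y \cong i^{\ast}i_{\ast}(Y)$ is compact by Lemma 4.2(1), and if $j_!(Z)$ is compact then $Z$ is compact by Lemma 4.2(2). So the work lies in the converse implications and, chiefly, in (3).

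For (2), represent a compact $Z$ by a bounded complex $Z^{\bullet}$ of finitely generated projective $B$-modules. Since the terms are projective, $j_!(Z) = Ae_B \otimes_B^L Z$ is computed termwise as $Ae_B \otimes_B Z^{\bullet}$, whose terms are direct summands of finite direct sums of $Ae_B \otimes_B B \cong Ae_B$ and hence finitely generated projective $A$-modules (as $Ae_B$ is a direct summand of ${}_AA$); thus $j_!(Z) \in K^b({}_AP)$ is compact. The same pattern handles (1) once one observes that $i_{\ast}$, being restriction of scalars along the surjection $A \twoheadrightarrow A/Ae_BA \cong C$, is exact and carries finitely generated projective $C$-modules to finitely generated projective $A$-modules: for a primitive idempotent $\varepsilon$ of $C$ with lift $\hat{\varepsilon} \in e_CAe_C \subseteq A$, a direct check from the matrix form of $A$ gives $i_{\ast}(C\varepsilon) \cong A\hat{\varepsilon}$ as left $A$-modules (the $A$-action on $A\hat{\varepsilon}$ already factors through $A \to C$). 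Applying $i_{\ast}$ termwise to a bounded complex of finitely generated projective $C$-modules representing $Y$ then exhibits $i_{\ast}(Y)$ in $K^b({}_AP)$. (Alternatively, the converse halves of (1) and (2) follow formally from the adjunctions $(i_{\ast}, i^!)$ and $(j_!, j^{\ast})$ together with the compactness criterion recalled in the proof of Lemma 4.2.)

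Part (3) is the substantive one. Since ${}_B(e_BA) \cong {}_BB$ is projective, $j_{\ast} = R\Hom_B(e_BA, -)$ is exact, so on a bounded complex $Z^{\bullet}$ of finitely generated projective $B$-modules it is computed termwise, and each term of $j_{\ast}(Z^{\bullet})$ is a direct summand of a finite direct sum of $\Hom_B(e_BA, B)$. A direct computation with the $(B,A)$-bimodule $e_BA$ identifies $\Hom_B(e_BA, B)$ with the left $A$-module ${}_AB := A/Ae_CA$. Hence it suffices to show that ${}_AB$ has finite projective dimension over $A$: then every term of $j_{\ast}(Z^{\bullet})$ has finite projective dimension, so $j_{\ast}(Z^{\bullet})$ is quasi-isomorphic to a bounded complex of finitely generated projective $A$-modules and $j_{\ast}(Z)$ is compact. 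To bound the projective dimension of ${}_AB$, use the short exact sequence of left $A$-modules
\[
0 \longrightarrow Ae_C \otimes_C M \longrightarrow Ae_B \longrightarrow {}_AB \longrightarrow 0,
\]
obtained by identifying ${}_AB$ with the cokernel of a submodule of $Ae_B$ and that submodule with $Ae_C \otimes_C M$. By hypothesis ${}_CM$ admits a finite resolution by finitely generated projective $C$-modules; applying the exact functor $Ae_C \otimes_C -$ (exact because $(Ae_C)_C \cong C_C$ is free) produces a finite resolution of $Ae_C \otimes_C M$ by finitely generated projective $A$-modules (using $Ae_C \otimes_C C\varepsilon \cong A\hat{\varepsilon}$), and splicing it with the displayed sequence yields $\pd_A({}_AB) \leqslant \pd_C({}_CM) + 1 < \infty$.

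The only genuinely delicate point is the bimodule bookkeeping in Part (3): verifying that $j_{\ast}(B) = \Hom_B(e_BA, B) \cong {}_AB = A/Ae_CA$ as left $A$-modules, and that the displayed short exact sequence is correct. These are the sole places where the triangular structure of $A$, as opposed to the abstract recollement formalism, actually intervenes; everything else reduces to the routine observation that the relevant functor may be applied termwise without leaving $K^b({}_AP)$.
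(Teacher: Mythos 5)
Your argument is correct and follows essentially the same route as the paper: forward directions of (1) and (2) by termwise preservation of finitely generated projectives (using $i_{\ast}(C\varepsilon)\cong A\varepsilon$ and $Ae_B\otimes_B B\cong Ae_B$), the converses via parts (1)--(2) of the preceding lemma (which you mislabel as Lemma 4.2 --- it is the lemma before the one being proved), and (3) by applying $j_{\ast}$ termwise to a perfect complex representing $Z$ and using that ${}_AB$ has finite projective dimension so the result is quasi-isomorphic to a perfect complex. The only difference is that you spell out the identification $j_{\ast}(B)\cong {}_AB=A/Ae_CA$ and justify $\pd_A({}_AB)\leqslant \pd_C({}_CM)+1$ via the exact sequence $0\to Ae_C\otimes_C M\to Ae_B\to {}_AB\to 0$, details the paper asserts without proof.
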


\begin{proof}
(1): If $Y$ is compact in $D^b(C)$, then $i_{\ast} (Y)$ is compact in $D^b(A)$ since $i_{\ast}$ sends projective $C$-modules to projective $A$-modules. Conversely, if $i_{\ast} (Y) \in D^b(A)$ is compact, so is $Y \cong i^{\ast} i_{\ast} (Y)$ by (1) of the previous lemma.

(2): Note that $j_!$ also preserves projective modules, so it sends compact objects in $D^b(B)$ to compact objects in $D^b(A)$. The other direction follows from (2) in the previous lemma.

(3): If $Z$ is compact, there is a perfect complex $P^{\bullet} \in D^b(B)$ quasi-isomorphic to $Z$, so $j_{\ast} (Z)$ is quasi-isomorphic to $j_{\ast} (P^{\bullet})$. Since by our assumption $_CM$ has finite projective dimension, so does $_AB$. Therefore, each term in $j_{\ast} (P^{\bullet})$, which is contained in $\add (_AB)$, has finite projective dimension. Consequently, $j_{\ast} (P^{\bullet})$ is quasi-isomorphic to a perfect complex $Q^{\bullet}$, and hence is compact.
\end{proof}

Let $Y \in D^{b} (C)$ and $Z \in D^b(B)$ be tilting objects. The following theorem is essentially similar to Theorem 4.5 in \cite{LVY}, but we do not have to assume that $A$, $B$, and $C$ have finite global dimensions.

\begin{theorem}
Notation as above. Then $X = i_{\ast} (Y) \oplus j_!(Z) \in D^b(A)$ is a tilting object if and only if $\Hom_{D^b(A)} (i_{\ast} (Y), j_!(Z)[n]) = 0$ for $n \neq 0$. If this holds, then $A$ is derived equivalent to
\begin{equation*}
E = \begin{bmatrix} \End_{D^b(A)} (j_!(Z)) ^{\op} & 0 \\ \Hom_{D^b(A)} (i_{\ast} (Y), j_!(Z))) & \End_{D^b(A)} (i_{\ast} (Y)) ^{\op} \end{bmatrix}.
\end{equation*}
\end{theorem}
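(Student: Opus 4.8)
The plan is to verify the three defining properties of a tilting object for $X = i_{\ast}(Y) \oplus j_!(Z)$ — compactness, exceptionality, and $\Tria(X) = K^b(_AP)$ — observing that only exceptionality is sensitive to the hypothesis, and then to identify $\End_{D^b(A)}(X)^{\op}$ with $E$ and invoke Rickard's theorem. Compactness is the easy part: $Y$ and $Z$ are tilting objects, hence compact, so by Lemma 4.2 both $i_{\ast}(Y)$ and $j_!(Z)$ are compact in $D^b(A)$, and therefore so is $X$; this uses nothing beyond the hypotheses on $Y$ and $Z$.

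Next I would establish generation, which also holds unconditionally. I would use the short exact sequence of $A$-modules $0 \to Ae_B \to A \to Ae_C \to 0$, read as a triangle $Ae_B \to A \to Ae_C \to Ae_B[1]$ in $D^b(A)$, together with the identifications $Ae_B = j_!(B_B)$ and $Ae_C = i_{\ast}(C_C)$ for the regular modules of $B$ and $C$. Since $Z$ is a tilting object of $B$ we have $B_B \in \Tria(Z) = K^b(_BP)$, and since $j_!$ restricts to a triangulated functor $K^b(_BP) \to K^b(_AP)$ (it preserves projectives) this gives $Ae_B \in \Tria(j_!(Z)) \subseteq \Tria(X)$; symmetrically $Ae_C \in \Tria(i_{\ast}(Y)) \subseteq \Tria(X)$. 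The triangle then forces $A \in \Tria(X)$, hence $K^b(_AP) = \Tria(A) \subseteq \Tria(X)$, while the reverse inclusion is automatic because $X$ is compact. So $\Tria(X) = K^b(_AP)$ regardless of the hypothesis.

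The heart of the statement is the equivalence: $X$ is exceptional if and only if $\Hom_{D^b(A)}(i_{\ast}(Y), j_!(Z)[n]) = 0$ for $n \neq 0$. I would expand $\Hom_{D^b(A)}(X, X[n])$ into four summands. Two of them vanish for $n \neq 0$ by parts (3) and (4) of Lemma 4.1, which give $\Hom_{D^b(A)}(i_{\ast}(Y), i_{\ast}(Y)[n]) \cong \Hom_{D^b(C)}(Y, Y[n]) = 0$ and $\Hom_{D^b(A)}(j_!(Z), j_!(Z)[n]) \cong \Hom_{D^b(B)}(Z, Z[n]) = 0$, since $Y$ and $Z$ are exceptional. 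The summand $\Hom_{D^b(A)}(j_!(Z), i_{\ast}(Y)[n])$ vanishes for \emph{all} $n$: by the adjunction $(j_!, j^{\ast})$ it equals $\Hom_{D^b(B)}(Z, j^{\ast} i_{\ast}(Y)[n])$, and $j^{\ast} i_{\ast} = 0$ in the recollement (4.1). Hence $\Hom_{D^b(A)}(X, X[n]) = 0$ for every $n \neq 0$ precisely when the one remaining summand $\Hom_{D^b(A)}(i_{\ast}(Y), j_!(Z)[n])$ vanishes for $n \neq 0$, which is exactly the stated condition. Together with the previous paragraph this shows that $X$ is a tilting object if and only if the hypothesis holds.

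Finally, under the hypothesis, I would compute the opposite endomorphism algebra: writing an endomorphism of $X = i_{\ast}(Y) \oplus j_!(Z)$ as a $2 \times 2$ matrix of morphisms and using once more that $\Hom_{D^b(A)}(j_!(Z), i_{\ast}(Y)) = 0$, one finds that $\End_{D^b(A)}(X)$ is a triangular matrix algebra with diagonal blocks $\End_{D^b(A)}(i_{\ast}(Y))$ and $\End_{D^b(A)}(j_!(Z))$ and corner bimodule $\Hom_{D^b(A)}(i_{\ast}(Y), j_!(Z))$; passing to the opposite algebra and reordering the two idempotents produces exactly the matrix $E$ displayed in the statement. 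Since $X$ is a tilting object in $D^b(A)$, Rickard's theorem then yields a derived equivalence between $A$ and $\End_{D^b(A)}(X)^{\op} = E$. I expect the proof to be essentially mechanical once Lemmas 4.1 and 4.2 are in hand; the only genuinely delicate point is the bookkeeping in this last step — tracking left/right module structures and the passage to the opposite so that the matrix comes out with the orientation written in the theorem — while the one conceptual input doing all the work is the vanishing $\Hom_{D^b(A)}(j_!(Z), i_{\ast}(Y)[n]) = 0$, which simultaneously collapses the exceptionality condition to the stated one and makes $E$ triangular.
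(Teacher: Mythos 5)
Your proposal is correct and follows essentially the same route as the paper: compactness from Lemma 4.2, generation by producing $Ae_B \in \Tria(j_!(Z))$ and $Ae_C = {}_AC \in \Tria(i_{\ast}(Y))$ from the regular modules, and exceptionality by splitting $\Hom_{D^b(A)}(X, X[n])$ into four summands of which only $\Hom_{D^b(A)}(i_{\ast}(Y), j_!(Z)[n])$ is not automatically zero. The only cosmetic difference is that you kill $\Hom_{D^b(A)}(j_!(Z), i_{\ast}(Y)[n])$ via the adjunction $(j_!, j^{\ast})$ together with $j^{\ast} i_{\ast} = 0$, whereas the paper propagates the module-level vanishing $\Hom_A(Ae_B, Ae_C) = 0$ to the subcategories $\Tria(Ae_B)$ and $\Tria(Ae_C)$; both arguments are immediate and equivalent in substance.
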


\begin{proof}
First, the compactness of $X$ is clear by the previous lemma.

Note that the functor $j_!$ maps isomorphism classes of indecomposable summands of $_BB$ bijectively to isomorphism classes of indecomposable summands of $Ae_B$. Therefore, $j_!$ gives a triangulated equivalence between $\Tria (Z) = K^b (_BP)$ and $\Tria (Ae_B)$. Thus $\Tria (j_!(Z)) = \Tria (Ae_B)$. In particular, $Ae_B \in \Tria (j_!(Z)) \subseteq \Tria(X)$. Clearly, $C \in \Tria (Y)$ since $Y$ is a tilting object, so $_AC \in \Tria (i_{\ast} (Y)) \subseteq \Tria(X)$. Consequently, $A = Ae_C \oplus Ae_B \in \Tria (X)$; that is, $\Tria (X) = K^b( _AP)$.

Now consider $\Hom_{D^b(A)} (X, X[n])$ for $n \neq 0$. Note that it can be written as
\begin{align*}
& \Hom_{D^b(A)} (i_{\ast} (Y), i_{\ast} (Y)[n]) \oplus \Hom_{D^b(A)} (j_!(Z), j_!(Z)[n]) \oplus\\
& \Hom_{D^b(A)} (i_{\ast} (Y), j_!(Z)[n]) \oplus \Hom_{D^b(A)} (j_!(Z), i_{\ast} (Y)[n]).
\end{align*}
The first two terms are zero for $i \neq 0$ by Lemma 4.1. Since $\Hom_A (Ae_B, Ae_C) = 0$, it follows that
\begin{equation*}
\Hom_{D^b(A)} (U, V) = 0, \quad \forall \, U \in \Tria (Ae_B), \forall \, V \in \Tria (Ae_C).
\end{equation*}
In particular, the last term is also 0 since $j_!(Z) \in \Tria (Ae_B)$ and $i_{\ast} (Y)[n] \in \Tria (Ae_C)$. Therefore, $X$ is exceptional if and only if $\Hom_{D^b(A)} (i_{\ast} (Y), j_!(Z)[n]) = 0$ for $n \neq 0$. This observation establishes the first statement, and the second one follows immediately.
\end{proof}

For $X = j_{\ast} (Z) \oplus i_{\ast} (Y)$, one gets a similar conclusion under an extra condition.

\begin{theorem}
Notation as above and suppose that $_CM$ has finite projective dimension. Then $X = j_{\ast} (Z) \oplus i_{\ast} (Y) \in D^b(A)$ is a tilting object if and only if for $n \neq 0$, one has $\Hom_{D^b(A)} (j_{\ast} (Z), i_{\ast} (Y)[n]) = 0$. If this holds, $A$ is derived equivalent to
\begin{equation*}
E = \begin{bmatrix} \End_{D^b(A)} (i_{\ast} (Y)) ^{\op} & 0 \\ \Hom_{D^b(A)} (j_{\ast} (Z), i_{\ast} (Y)) & \End_{D^b(A)} ((j_{\ast} (Z)) ^{\op}) \end{bmatrix}.\end{equation*}
\end{theorem}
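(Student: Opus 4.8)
The plan is to follow the proof of Theorem 4.4 closely; the hypothesis that ${}_CM$ has finite projective dimension will be used twice.

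\emph{Compactness and exceptionality.} By Lemma 4.3(1), $i_{\ast}(Y)$ is compact, since $Y$, being a tilting object, is compact in $D^b(C)$; and by Lemma 4.3(3) so is $j_{\ast}(Z)$ --- this is the first use of the finite projective dimension hypothesis. Hence $X$ is compact, and therefore $\Tria(X) \subseteq K^b({}_AP)$. Now write $\Hom_{D^b(A)}(X, X[n])$ as the direct sum of $\Hom_{D^b(A)}(i_{\ast}(Y), i_{\ast}(Y)[n])$, $\Hom_{D^b(A)}(j_{\ast}(Z), j_{\ast}(Z)[n])$, $\Hom_{D^b(A)}(i_{\ast}(Y), j_{\ast}(Z)[n])$ and $\Hom_{D^b(A)}(j_{\ast}(Z), i_{\ast}(Y)[n])$. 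The first two vanish for $n \neq 0$ by parts (3) and (5) of Lemma 4.1, since $Y$ and $Z$ are exceptional. For the third, the adjunction $i_{\ast} \dashv i^!$ yields $\Hom_{D^b(A)}(i_{\ast}(Y), j_{\ast}(Z)[n]) \cong \Hom_{D^b(C)}(Y, (i^! j_{\ast}(Z))[n])$, which is zero for \emph{every} $n$ because $i^! j_{\ast} = 0$ in any recollement. Thus $X$ is exceptional if and only if the fourth summand vanishes for $n \neq 0$, which is exactly the asserted condition.

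\emph{Generation.} Granting this condition, it remains to prove $\Tria(X) = K^b({}_AP)$, and by compactness it suffices to show that both summands of $A = Ae_B \oplus Ae_C$ lie in $\Tria(X)$. Since $Y$ is a tilting object, $C \in \Tria(Y) = K^b({}_CP)$, and $i_{\ast}(C) \cong {}_AC \cong Ae_C$; hence $Ae_C \in i_{\ast}(\Tria(Y)) \subseteq \Tria(i_{\ast}(Y)) \subseteq \Tria(X)$. For the summand $Ae_B$ the crucial input is the short exact sequence of left $A$-modules
\begin{equation*}
0 \longrightarrow i_{\ast}({}_CM) \longrightarrow Ae_B \longrightarrow j_{\ast}(B) \longrightarrow 0,
\end{equation*}
in which the cokernel is identified with $j_{\ast}(B)$: from $j_{\ast} = R\Hom_B(e_BA, -)$ and the fact that $e_BA$ is free as a left $B$-module one computes $j_{\ast}(B) = \Hom_B(e_BA, B) \cong {}_A(A/Ae_CA)$, which is precisely the cokernel of the inclusion $i_{\ast}({}_CM) \hookrightarrow Ae_B$. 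Now $B \in \Tria(Z) = K^b({}_BP)$ because $Z$ is a tilting object, so $j_{\ast}(B) \in \Tria(j_{\ast}(Z)) \subseteq \Tria(X)$; and since ${}_CM$ has finite projective dimension --- the second use of the hypothesis --- one has ${}_CM \in K^b({}_CP) = \Tria(Y)$, so $i_{\ast}({}_CM) \in \Tria(i_{\ast}(Y)) \subseteq \Tria(X)$. The triangle associated to the displayed sequence then places $Ae_B$ in $\Tria(X)$. Hence $A \in \Tria(X)$ and $X$ is a tilting object.

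\emph{The endomorphism algebra.} By Rickard's theorem $A$ is derived equivalent to $\End_{D^b(A)}(X)^{\op}$. Since $\Hom_{D^b(A)}(i_{\ast}(Y), j_{\ast}(Z)) = 0$ (the case $n = 0$ above), $\End_{D^b(A)}(X)$ is a triangular matrix algebra glued from $\End_{D^b(A)}(j_{\ast}(Z))$ and $\End_{D^b(A)}(i_{\ast}(Y))$ by the bimodule $\Hom_{D^b(A)}(j_{\ast}(Z), i_{\ast}(Y))$; taking the opposite algebra gives the matrix $E$ of the statement. The step I expect to require the most care is the identification of the cokernel of $i_{\ast}({}_CM) \hookrightarrow Ae_B$ with $j_{\ast}(B)$ --- that is, pinning down $j_{\ast}$ on $B$ --- together with the observation that finite projective dimension of ${}_CM$ is exactly what lets $i_{\ast}({}_CM)$ sit inside $\Tria(i_{\ast}(Y))$; everything else is adjunction bookkeeping as in Theorem 4.4.
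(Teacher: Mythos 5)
Your proposal is correct and takes essentially the same route as the paper's proof: compactness of $X$ from Lemma 4.2(1),(3) (which you cite as ``Lemma 4.3''), vanishing of $\Hom_{D^b(A)}(i_{\ast}(Y), j_{\ast}(Z)[n])$ for all $n$ via the adjunction and $i^!j_{\ast}=0$, and generation via $Ae_C = i_{\ast}(C)$ together with the short exact sequence $0 \to {}_AM \to Ae_B \to {}_AB \to 0$, whose terms you correctly identify as $i_{\ast}({}_CM)$ and $j_{\ast}(B)$. The only differences are cosmetic: the citation label should be Lemma 4.2, and your explicit verification that $j_{\ast}(B)\cong {}_AB$ and that finite projective dimension of ${}_CM$ places $i_{\ast}({}_CM)$ in $\Tria(i_{\ast}(Y))$ merely spells out steps the paper leaves implicit.
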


\begin{proof}
First, the compactness of $X$ follows from Lemma 4.2. Note that here we require the projective dimension of $_CM$ to be finite to obtain the compactness of $j_{\ast} (Z)$.

Since $_CM$ and $_CC$ have finite projective dimensions, they belong to $\Tria (Y)$, so $_AM$ and $_AC$ are contained in $\Tria (i_{\ast} (Y)) \subseteq \Tria (X)$. On the other hand, $_BB \in \Tria (Z)$, so $_AB \in \Tria (j_{\ast} (Z)) \subseteq \Tria (X)$. The short exact sequence $0 \to M \to Ae_B \to B \to 0$ of $A$-modules tells us that $Ae_B \in \Tria (X)$.  Consequently, $A = Ae_B \oplus Ae_C \in \Tria (X)$; that is, $\Tria (X) = K^b( _AP)$.

One has $\Hom_{D^b(A)} (i_{\ast} (Y), i_{\ast} (Y)[n]) =0$ and $\Hom_{D^b(A)} (j_{\ast} (Z), j_{\ast} (Z)[n]) = 0$ for $n \neq 0$ by Lemma 4.1. For $n \in \mathbb{Z}$, one has by adjunction
\begin{equation*}
\Hom_{D^b(A)} (i_{\ast} (Y), j_{\ast} (Z)[n]) \cong \Hom_{D^b(A)} (Y, i^! j_{\ast} (Z)[n]) = 0
\end{equation*}
since $i^! j_{\ast} = 0$; see Definition 3.7. Therefore, $X$ is exceptional if and only if $\Hom_{D^b(A)} (j_{\ast} (Z), i_{\ast} (Y)[n]) = 0$ for $n \neq 0$. The first statement then follows from this observation, while the second statement is immediate.
\end{proof}

We apply the above theorems to a few cases which are of most interest to people.

\begin{corollary}
Let $Z \in D^{b} (B)$ be a tilting object. Then $C \oplus j_!(Z) \in D^b(A)$ is a tilting object if and only if $H^n (j_!(Z)) \in B\module$ for $n \neq 0$.
\end{corollary}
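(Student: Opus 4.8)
The plan is to read the corollary off Theorem 4.3 by taking $Y = {}_CC$, the regular representation, which is the trivial tilting object of $D^b(C)$: it is a finitely generated projective (hence compact) module, $\Hom_{D^b(C)}(C, C[n]) = 0$ for $n \neq 0$, and $\Tria({}_CC) = K^b({}_CP)$. Under the inclusion functor its image is the $A$-module $i_*({}_CC) = {}_AC$, so Theorem 4.3 says that $C \oplus j_!(Z) \in D^b(A)$ is a tilting object if and only if $\Hom_{D^b(A)}({}_AC, j_!(Z)[n]) = 0$ for $n \neq 0$. It remains to translate this into the cohomological condition in the statement.

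For that, recall that ${}_AC$ is projective as a left $A$-module: one has $Ae_BA = Ae_B$, so ${}_AC = A/Ae_BA \cong A/Ae_B \cong Ae_C$ — this is exactly the projectivity of $C$ used in the proof of Proposition 3.3. Hence for any $W \in D^b(A)$ there are natural isomorphisms $\Hom_{D^b(A)}({}_AC, W[n]) \cong \Hom_{D^b(A)}(Ae_C, W[n]) \cong H^n(\Hom_A(Ae_C, W)) \cong e_C H^n(W)$, the middle one because $Ae_C$ is projective and the last because $\Hom_A(Ae_C, -) \cong e_C(-)$ is exact. (Equivalently, one can use the adjunction $\Hom_{D^b(A)}(i_*({}_CC), W[n]) \cong \Hom_{D^b(C)}({}_CC, i^!(W)[n]) = H^n(i^!(W))$ together with $i^! = R\Hom_A(C, -) = e_C(-)$, which again holds because ${}_AC$ is projective.) Taking $W = j_!(Z)$, the criterion of Theorem 4.3 becomes the requirement that $e_C H^n(j_!(Z)) = 0$ for all $n \neq 0$.

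The last step is to note that an $A$-module $X$ satisfies $e_C X = 0$ if and only if it belongs to $B\module$, regarded inside $A\module$ as the full subcategory of modules annihilated by $Ae_CA$ (equivalently, the torsion-free class of the torsion pair $(C\module, B\module)$). Applying this with $X = H^n(j_!(Z))$ turns the condition $e_C H^n(j_!(Z)) = 0$ (for $n \neq 0$) into $H^n(j_!(Z)) \in B\module$ (for $n \neq 0$), which completes the argument. There is no real obstacle here; the only point deserving a moment's care is the identity $\Hom_{D^b(A)}({}_AC, W[n]) \cong e_C H^n(W)$, which depends only on the projectivity of ${}_AC$ and the exactness of the truncation functor $e_C(-)$ — both standard features of triangular matrix algebras and already exploited earlier in the paper.
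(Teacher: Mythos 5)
Your proposal is correct and follows essentially the same route as the paper: apply Theorem 4.3 with $Y = {}_CC$, so the criterion becomes $\Hom_{D^b(A)}({}_AC, j_!(Z)[n]) = 0$ for $n \neq 0$, and then use the projectivity of ${}_AC \cong Ae_C$ and exactness of $e_C(-)$ to identify this Hom-space with $e_C H^n(j_!(Z))$, which vanishes exactly when $H^n(j_!(Z)) \in B\module$. Your write-up merely makes explicit the justification of the isomorphism $\Hom_{D^b(A)}(C, j_!(Z)[n]) \cong e_C H^n(j_!(Z))$ that the paper states without comment.
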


\begin{proof}
By Theorem 4.3, $C \oplus j_!(Z) \in D^b(A)$ is a tilting object if and only if $\Hom _{D^b(A)} (C, j!(Z)[n]) = 0$ for all $n \neq 0$. But
\begin{equation*}
\Hom _{D^b(A)} (C, j!(Z)[n]) \cong \Hom_A (C, H^n(j_!(Z))) \cong e_C H^n(j_!(Z))
\end{equation*}
is 0 if and only if $H^n (j_!(Z)) \in B\module$ for all $n \neq 0$.
\end{proof}

Another case is $Z = B$.

\begin{corollary}
Let $P^{\bullet} \in K^b( _C P)$ be a tilting complex. Then $X = Ae_B \oplus P^{\bullet}$ is a tilting object in $D^b(A)$ if and only if for $n \neq 0$, the map
\begin{equation*}
d_n^{\ast}: \Hom_A (P^{n+1}, Ae_B) \rightarrow \Hom_A (P^n, Ae_B)
\end{equation*}
induced by the differential map $d_n: P^n \rightarrow P^{n+1}$ is surjective.
\end{corollary}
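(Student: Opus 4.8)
The plan is to realise $X = Ae_B \oplus P^{\bullet}$ as the object $j_!(B) \oplus i_{\ast}(P^{\bullet})$ of $D^b(A)$ and to apply Theorem~4.3 with $Z = B$ (the trivial tilting complex of $D^b(B)$) and $Y = P^{\bullet}$. Since $j_!(B) = Ae_B \otimes_B^L B \cong Ae_B$, that theorem tells us $X$ is a tilting object if and only if $\Hom_{D^b(A)}(i_{\ast}(P^{\bullet}), Ae_B[n]) = 0$ for every $n \neq 0$; so the task is to evaluate these Hom-groups and to identify their vanishing with the stated surjectivity condition on the $d_n^{\ast}$.

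For the evaluation, recall that $i_{\ast}$ sends projective $C$-modules to projective $A$-modules (see the proof of Lemma~4.2, or note directly that $i_{\ast}(Ce) \cong Ae$ for a primitive idempotent $e$ of $C$); hence $i_{\ast}(P^{\bullet})$ is a bounded complex of projective $A$-modules. Consequently morphisms out of $i_{\ast}(P^{\bullet})$ in $D^b(A)$ agree with morphisms in the homotopy category, and $\Hom_{D^b(A)}(i_{\ast}(P^{\bullet}), Ae_B[n])$ is computed by applying $\Hom_A(-, Ae_B)$ to $i_{\ast}(P^{\bullet})$ termwise. Explicitly, writing $\mathbf{K}$ for the complex with terms $\Hom_A(P^m, Ae_B) \cong \Hom_C(P^m, {}_CM)$ ($m \in \mathbb{Z}$) and differentials $d_m^{\ast}\colon \Hom_A(P^{m+1}, Ae_B) \to \Hom_A(P^m, Ae_B)$ induced by $d_m$, one has that $\Hom_{D^b(A)}(i_{\ast}(P^{\bullet}), Ae_B[n])$ is the homology of $\mathbf{K}$ at the term $\Hom_A(P^{-n}, Ae_B)$. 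Thus the criterion of Theorem~4.3 amounts to saying that $\mathbf{K}$ is exact at $\Hom_A(P^m, Ae_B)$ for every $m \neq 0$.

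It then remains to match this exactness with surjectivity of the $d_n^{\ast}$ for $n \neq 0$. One direction is immediate: if each $d_n^{\ast}$ with $n \neq 0$ is surjective, then the incoming differential of $\mathbf{K}$ into $\Hom_A(P^n, Ae_B)$ is onto whenever $n \neq 0$, so $\mathbf{K}$ is exact there, and $X$ is tilting. The converse is the substantive part, and I expect it to be the main obstacle: one must deduce surjectivity of each $d_n^{\ast}$ ($n \neq 0$) from exactness of $\mathbf{K}$ away from the single term $\Hom_A(P^0, Ae_B)$, and since exactness away from one spot is for a general bounded complex weaker than surjectivity of every incoming differential, the proof must use the concrete shape of $\mathbf{K}$. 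The situation is completely transparent — and this already covers the APR-type tilting complexes that motivate the statement — when $P^{\bullet}$ is concentrated in two consecutive degrees, for then $\mathbf{K}$ has only two nonzero terms and exactness away from degree $0$ is literally surjectivity of the unique nontrivial differential.
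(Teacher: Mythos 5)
Your reduction is exactly the paper's: take $Z=B$ and $Y=P^{\bullet}$ in Theorem 4.3, identify $j_!(B)\cong Ae_B$, and use that $i_{\ast}(P^{\bullet})$ is a bounded complex of projective $A$-modules to compute $\Hom_{D^b(A)}(i_{\ast}(P^{\bullet}),Ae_B[n])$ as the cohomology of $\Hom_A(P^{\bullet},Ae_B)\cong\Hom_C(P^{\bullet},M)$. Your proof of the ``if'' direction is complete. The converse, which you flag and do not prove, is precisely the step the paper elides: its proof asserts that the vanishing of $\Hom_{K^b({}_AP)}(P^{\bullet},Ae_B[n])$ for $n\neq 0$ means ``every $f_n\in\Hom_A(P^n,Ae_B)$ factors through $d_n$,'' overlooking that only chain maps, i.e. those $f_n$ with $f_n d_{n-1}=0$, need to be null-homotopic. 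What the vanishing actually says is what you derived: the complex $\Hom_A(P^{\bullet},Ae_B)$ is exact at every place $n\neq 0$, equivalently $\Hom_{D^b(C)}(P^{\bullet},M[n])=0$ for $n\neq 0$ (compare Corollary 4.7).

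Your suspicion that the converse fails in general is justified, so the gap lies in the statement and in the paper's own proof rather than in your argument. Since $\operatorname{im}d_n^{\ast}\subseteq\ker d_{n-1}^{\ast}$, surjectivity of all $d_n^{\ast}$ with $n\neq 0$ forces $d_{n-1}^{\ast}=0$ for every such $n$, which is strictly stronger than exactness away from degree $0$ as soon as $P^{\bullet}$ has three nonzero terms. A concrete counterexample to the ``only if'' direction: let $C$ be the path algebra of $1\rightarrow 2\rightarrow 3$ with the length-two path equal to zero, let $T=S_1\oplus P_1\oplus P_2$ (a tilting module with $\pd_C T=2$), let $P^{\bullet}$ be its minimal projective resolution $P_3\rightarrow P_2\rightarrow P_1\oplus P_1\oplus P_2$ placed in degrees $-2,-1,0$, and take $B=k$, $M=P_2$. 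Then $\Ext_C^1(T,M)=\Ext_C^2(T,M)=0$, so $X=Ae_B\oplus P^{\bullet}$ is a tilting object by Theorem 4.3 (or Corollary 4.7), yet $d_{-1}^{\ast}\colon\Hom_A(P^0,Ae_B)\rightarrow\Hom_A(P^{-1},Ae_B)$ is the zero map into a one-dimensional space, hence not surjective; exactness at the spot $-1$ holds only because $\ker d_{-2}^{\ast}=0$, which is exactly the cocycle condition the paper forgets. The corollary is correct in the situation you single out, when $P^{\bullet}$ is concentrated in two consecutive degrees (which covers the APR-type applications); in general the surjectivity condition should be replaced by exactness of $\Hom_A(P^{\bullet},Ae_B)$ in all nonzero degrees, i.e. by the $\Ext$-vanishing form of the criterion as in Corollaries 4.7 and 4.8.
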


\begin{proof}
Let $Z = B$. Then $j_! (Z) \cong Ae_B$. By Theorem 4.3, $X$ is a tilting object if and only if
\begin{equation*}
\Hom_{D^b(A)} (P^{\bullet}, Ae_B[n]) \cong \Hom_{K^b (_AP)} (P^{\bullet}, Ae_B[n]) = 0
\end{equation*}
for $n \neq 0$. That is, every chain map is a homotopy. By the diagram,
\begin{equation*}
\xymatrix{ \ldots \ar[r] & P^{n-1} \ar[r] ^{d_{n-1}} & P^n \ar[r] ^{d_n} \ar@{-->}[dl] \ar[d] ^{f_n} & P^{n+1} \ar@{-->}[dl] \ar[r] & \ldots\\
\ldots \ar[r] & 0 \ar[r] & Ae_B \ar[r] & 0 \ar[r] & \ldots}
\end{equation*}
This holds if and only if every $f_n \in \Hom_A (P^n, Ae_B)$ factors through $d_n$ for $n \neq 0$. That is, the map $d_n^{\ast}$ is surjective for $n \neq 0$.
\end{proof}

Recall a finitely generated $A$-module $T$ is a \emph{tilting module} if it satisfies the following conditions: $\pd_A T < \infty$; $\Ext_A^i (T, T) = 0$ for $i \geqslant 1$; and $\Tria (T) = K^b (_AP)$. We do not require the projective dimension of $T$ to be at most 1, as demanded by other authors in literature.

\begin{corollary}
Let $T$ be a tilting $C$-module. Then $_AT \oplus Ae_B$ is a tilting $A$-module if and only if $\Ext_A^i (T, M) = 0$ for $1 \leqslant i \leqslant \pd_C T$.
\end{corollary}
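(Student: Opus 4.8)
The plan is to obtain this as the special case $Y=T$, $Z={}_BB$ of Theorem 4.3. Here $T$ is a tilting object in $D^b(C)$ since tilting modules are tilting objects, and ${}_BB$ is trivially a tilting object in $D^b(B)$. For these choices $i_{\ast}(Y)={}_AT$ and $j_!(Z)=Ae_B\otimes_B^L B\cong Ae_B$, so $X=i_{\ast}(Y)\oplus j_!(Z)={}_AT\oplus Ae_B$, and Theorem 4.3 tells us that $X$ is a tilting object in $D^b(A)$ if and only if $\Hom_{D^b(A)}({}_AT,Ae_B[n])=0$ for $n\neq 0$. Since both ${}_AT$ and $Ae_B$ are stalk complexes in degree zero, this condition reads $\Ext_A^n({}_AT,Ae_B)=0$ for $n\geqslant 1$ (the groups with $n<0$ vanish automatically).

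Next I would check that in this situation ``tilting object'' and ``tilting module'' mean the same thing. Because the triangular structure gives $Ae_BA=Ae_B$, the module ${}_AC=A/Ae_BA\cong Ae_C$ is projective; hence applying the exact functor $i_{\ast}$ to a finitely generated projective resolution of ${}_CT$ over $C$ yields a projective resolution of ${}_AT$ over $A$ whose terms lie in $\add(Ae_C)$. In particular $\pd_A {}_AT\leqslant \pd_C T<\infty$, so $X={}_AT\oplus Ae_B$ is a module of finite projective dimension, which means that ``$X$ is a tilting object'' is literally the statement ``$X$ is a tilting $A$-module'' in the sense used in the paper (compactness being equivalent to finite projective dimension, and exceptionality to vanishing of $\Ext_A^{\geqslant 1}$).

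Then I would translate the $\Ext$ condition. Apply $\Hom_A({}_AT,-)$ to the short exact sequence $0\to {}_AM\to Ae_B\to {}_AB\to 0$ already used in the proof of Theorem 4.4. Since the chosen projective resolution of ${}_AT$ lies in $\add(Ae_C)$ and $\Hom_A(Ae_C,{}_AB)\cong e_C\cdot {}_AB=0$ (as ${}_AB$ is annihilated by $e_C$), one gets $\Ext_A^n({}_AT,{}_AB)=0$ for all $n\geqslant 0$; feeding this into the long exact sequence shows $\Ext_A^n({}_AT,Ae_B)\cong \Ext_A^n({}_AT,{}_AM)$ for every $n\geqslant 1$. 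Therefore $X$ is tilting if and only if $\Ext_A^n(T,M)=0$ for all $n\geqslant 1$, and because $\pd_A {}_AT\leqslant \pd_C T$ this is equivalent to $\Ext_A^i(T,M)=0$ for $1\leqslant i\leqslant \pd_C T$, as claimed.

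There is no genuinely hard step; the only points requiring care are the identification ${}_AC\cong Ae_C$ (which keeps the projective dimension finite and the resolution inside $\add(Ae_C)$) and the vanishing $\Ext_A^{\ast}({}_AT,{}_AB)=0$, which is exactly what permits replacing $Ae_B$ by $M$ in the computation. Alternatively, one could bypass the short exact sequence and use the adjunction $\Hom_{D^b(A)}(i_{\ast}(T),Ae_B[n])\cong \Hom_{D^b(C)}(T,i^!(Ae_B)[n])$ together with $i^!(Ae_B)=R\Hom_A(C,Ae_B)\cong \Hom_A(Ae_C,Ae_B)=e_CAe_B\cong {}_CM$, which directly yields $\Ext_C^n(T,M)$; this agrees with $\Ext_A^n(T,M)$ by change of rings, again because ${}_AC$ is projective.
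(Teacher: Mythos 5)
Your proposal is correct and follows essentially the same route as the paper: specialize Theorem 4.3 (the paper's citation of Theorem 4.4 at this point is a slip) to $Y=T$, $Z={}_BB$, and identify the remaining condition with $\Ext_A^{\ast}(T,M)$ --- the paper does this via the adjunction $i^!j_!(B)\cong\Hom_A(C,Ae_B)\cong M$, which you record as an alternative, while your main computation instead runs the long exact sequence for $0\to M\to Ae_B\to B\to 0$ using $\Ext_A^{\ast}(T,B)=0$. Your additional verification that the tilting object ${}_AT\oplus Ae_B$ is genuinely a tilting module, via $\pd_A T\leqslant\pd_C T$ and a projective resolution in $\add(Ae_C)$ (using ${}_AC\cong Ae_C$), makes explicit a point the paper leaves implicit in its final remark that $\pd_C T=\pd_A T$.
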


\begin{proof}
Let $Y = T$ and $Z = B$. Then $j_!(B) \cong Ae_B$ and $i_{\ast} (T)$ is $_A T$. By Theorem 4.4, $_AT \oplus Ae_B$ is tilting if and only if $\Hom _{D^b(A)} (i_{\ast} (T), j_! (B)[n]) = 0$ for $n \neq 0$; and by adjunction, if and only if $\Hom _{D^b(C)} (T, i^!j_! (B)[n]) = 0$ for $n \neq 0$. But
\begin{equation*}
i^!j_!(B) \cong \Hom_A (C, Ae_B) \cong e_CAe_B = M.
\end{equation*}
Therefore, $_AT \oplus Ae_B$ is a tilting module if and only if $\Hom _{D^b(A)} (T, M[n]) \cong \Ext_A^n (T, M) = 0$ for $n \neq 0$. The conclusion follows by observing $\pd_C T = \pd_A T$.
\end{proof}

Let $T$ be a tilting $C$-module. We consider the tilting property of $X = B[r] \oplus T[s]$. Applying degree shift if necessary, we may assume $r = 0$.

\begin{corollary}
Let $T$ be a tilting $C$-module. Then $B \oplus T[s]$ is a tilting object in $D^b(A)$ if and only if $\Ext_C^r (M, T) = 0$ for $r \neq s-1$. If this holds, $A$ is derived equivalent to
\begin{equation*}
E = \End _{D^b(A)} (B \oplus T[s])^{\op} = \begin{bmatrix} \End_C (T)^{\op} & 0 \\ \Ext_C ^{s-1} (M, T) & B \end{bmatrix}.
\end{equation*}
\end{corollary}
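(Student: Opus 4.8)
The plan is to apply Theorem~4.4 with $Z = {}_BB \in D^b(B)$ and $Y = T[s] \in D^b(C)$. Both are tilting objects: ${}_BB$ is the regular module, and $T[s]$ is tilting since $T$ is a tilting $C$-module and $\Tria$ is closed under shifts, so $\Tria(T[s]) = \Tria(T) = K^b(_CP)$. In the recollement~(4.1) for $A = (B,C,M)$ the functor $i_{\ast}$ is the inclusion, so $i_{\ast}(T[s]) = {}_AT[s]$, while $j_{\ast}({}_BB)$ is the $A$-module obtained by restriction along the projection $A \twoheadrightarrow B = A/Ae_CA$, that is, the cokernel ${}_AB$ in the short exact sequence $0 \to {}_AM \to Ae_B \to {}_AB \to 0$ already used in the proof of Theorem~4.4. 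Thus $j_{\ast}(Z) \oplus i_{\ast}(Y)$ is exactly $B \oplus T[s]$, and Theorem~4.4 will say that it is a tilting object if and only if $\Hom_{D^b(A)}(j_{\ast}(B), i_{\ast}(T[s])[n]) = 0$ for $n \neq 0$, with $E$ of the asserted triangular form whose corners are $\End_{D^b(A)}(i_{\ast}(T[s]))^{\op}$ and $\End_{D^b(A)}(j_{\ast}(B))^{\op}$ and whose off-diagonal entry is $\Hom_{D^b(A)}(j_{\ast}(B), i_{\ast}(T[s]))$. It then remains to rewrite these groups in terms of $\Ext_C^{\ast}(M,T)$.

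The one computation that does the real work is $i^{\ast}j_{\ast}({}_BB) \cong {}_CM[1]$ in $D^b(C)$. I would get this by applying the triangle functor $i^{\ast}$ to the distinguished triangle ${}_AM \to Ae_B \to {}_AB \to {}_AM[1]$ coming from the above short exact sequence. Since $Ae_B \cong j_!({}_BB)$ (see the proof of Corollary~4.6) and $i^{\ast}j_! = 0$, we have $i^{\ast}(Ae_B) = 0$, so the triangle forces $i^{\ast}({}_AB) \cong i^{\ast}({}_AM)[1]$, while $i^{\ast}({}_AM) = i^{\ast}i_{\ast}({}_CM) \cong {}_CM$. The adjunction $(i^{\ast}, i_{\ast})$ then gives, for every $n \in \mathbb{Z}$,
\begin{align*}
\Hom_{D^b(A)}(j_{\ast}(B), i_{\ast}(T[s])[n]) &\cong \Hom_{D^b(C)}(i^{\ast}j_{\ast}(B), T[s+n])\\
&\cong \Hom_{D^b(C)}({}_CM, T[s+n-1]) = \Ext_C^{s+n-1}(M,T).
\end{align*}
As $n$ runs over $\mathbb{Z} \setminus \{0\}$ the exponent $s+n-1$ runs over $\mathbb{Z} \setminus \{s-1\}$, so the criterion of Theorem~4.4 becomes $\Ext_C^r(M,T) = 0$ for $r \neq s-1$; the case $n = 0$ identifies the off-diagonal corner of $E$ as $\Ext_C^{s-1}(M,T)$. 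Finally, because $i_{\ast}$ and $j_{\ast}$ are fully faithful one has $\End_{D^b(A)}(i_{\ast}(T[s]))^{\op} \cong \End_C(T)^{\op}$ and $\End_{D^b(A)}(j_{\ast}(B))^{\op} \cong \End_B({}_BB)^{\op} \cong B$, which produces the displayed matrix for $E$.

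Since Theorem~4.4 carries the standing hypothesis that ${}_CM$ have finite projective dimension, I would check that this is automatic on both sides of the equivalence being proved; there it is used only to guarantee compactness of $j_{\ast}(Z)$. If $B \oplus T[s]$ is a tilting object then its summand $B = j_{\ast}(B)$ is compact, so $\pd_A({}_AB) < \infty$, and then the sequence $0 \to {}_AM \to Ae_B \to {}_AB \to 0$ together with the fact that $i_{\ast}$ preserves projectives forces $\pd_C({}_CM) < \infty$. Conversely, if $\Ext_C^r(M,T) = 0$ for all $r \neq s-1$, I would use a finite coresolution $0 \to {}_CC \to T^0 \to \cdots \to T^d \to 0$ with $T^i \in \add({}_CT)$ (which exists because $T$ is a tilting module); the hyper-$\Ext$ spectral sequence then gives $\Ext_C^i(M,C) = 0$ for $i > d+s-1$, whence $\pd_C({}_CM) < \infty$. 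Beyond this finiteness bookkeeping and the off-by-one in the exponent $r = s+n-1$, nothing else is needed: the statement is a direct specialization of Theorem~4.4, and I expect the exponent shift in the $\Ext$-translation to be the only place where a sign or index slip could occur.
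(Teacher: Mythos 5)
Your main line of argument is the paper's own: apply Theorem~4.4 with $Z = {}_BB$ and $Y = T[s]$, identify $j_{\ast}(B) \cong {}_AB$ and $i_{\ast}(T[s]) = {}_AT[s]$, and convert $\Hom_{D^b(A)}(B, T[s+n])$ into $\Ext_C^{s+n-1}(M,T)$ using the triangle ${}_AM \to Ae_B \to {}_AB \to {}_AM[1]$. The paper does this by applying $\Hom_{D^b(A)}(-,T)$ to the triangle and using $\Hom_{D^b(A)}(Ae_B, T[n]) = 0$ for all $n$; your computation $i^{\ast}j_{\ast}(B) \cong {}_CM[1]$ followed by the $(i^{\ast}, i_{\ast})$ adjunction is the same calculation in adjoint form, and your identification of the entries of $E$ is correct.

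The genuine problem is your final paragraph, where you claim the hypothesis $\pd_C({}_CM) < \infty$ of Theorem~4.4 is automatic on both sides. The forward direction is fine (a tilting object is compact, so $\pd_A({}_AB)<\infty$, hence $\pd_C({}_CM)<\infty$), but the converse step ``$\Ext_C^{i}(M,C) = 0$ for $i > d+s-1$, whence $\pd_C({}_CM) < \infty$'' is not a valid implication over an arbitrary finite dimensional algebra: eventual vanishing of $\Ext_C^{\ast}(M,C)$ does not bound the projective dimension of $M$. Concretely, take $C = k[x]/(x^2)$, $B = k$, $M = k$ the simple $C$-module, $T = C$ and $s = 1$. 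Since $C$ is self-injective, $\Ext_C^r(M,T) = 0$ for all $r \neq 0 = s-1$, yet $\pd_C({}_CM) = \infty$; since a projective $C$-resolution of $M$ is a projective $A$-resolution (as ${}_AC = Ae_C$ is projective), $\pd_A({}_AB) = 1 + \pd_C({}_CM) = \infty$, so ${}_AB$ is not compact and $B \oplus T[1]$ is not a tilting object, although the $\Ext$-condition holds. So the statement without the finiteness hypothesis is actually false in the ``if'' direction, and your bridge cannot be repaired; the hypothesis $\pd_C({}_CM) < \infty$ must simply be carried over from Theorem~4.4 (as the paper implicitly does --- note that the application immediately after the corollary sets $d = \pd_C M$). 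With that hypothesis retained, the rest of your argument is correct and coincides with the paper's proof.
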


\begin{proof}
By Theorem 4.4, $B \oplus T[s]$ is a tilting object if and only if for $n \neq 0$, $\Hom _{D^b(A)} (B, T[n+s]) = 0$. Apply $\Hom _{D^b(A)} (-, T)$ to $M \to Ae_B \to B \to M[1]$ and note that $\Hom_{D^b(A)} (Ae_B, T[n]) = 0$ for $n \in \mathbb{Z}$. Therefore, we have
\begin{align*}
\Hom _{D^b(A)} (B, T[n+s]) & \cong \Hom _{D^b(A)} (M, T[n+s-1])\\
& \cong \Ext_A^{n+s-1} (M, T) \cong \Ext_C^{n+s-1} (M, T),
\end{align*}
and the conclusion follows.
\end{proof}

This corollary unifies some results in \cite{AH, Ladkani, Maycock}. Note that there is a little difference because in those papers the authors work on right modules and we work on left modules instead.

When $s = 1$, the above corollary tells us that $B \oplus T[1]$ is a tilting object if and only if $\Ext_A^r (M, T) = 0$ for $r \geqslant 1$. In this case, $A$ is derived equivalent to
\begin{equation*}
E = \End _{D^b(A)} (B \oplus T[1])^{\op} = \begin{bmatrix} \End_C (T)^{\op} & 0 \\ \Hom_C (M, T) & B \end{bmatrix}.
\end{equation*}
This is precisely Theorem 4.5 in \cite{Ladkani} and Theorem 5.2 in \cite{Maycock}.

Let $d = \pd_C M$. By this corollary, $B \oplus C[d+1]$ is a tilting object if and only if $\Ext_C^r (M, C) = 0$ for $0 \leqslant r \leqslant d-1$, and in this situation, $A$ is derived equivalent to
\begin{equation*}
E = \End _{D^b(A)} (B \oplus C[d+1])^{\op} = \begin{bmatrix} C & 0 \\ \Ext_C^d (M, C) & B \end{bmatrix}.
\end{equation*}
This is Theorem 2.1 in \cite{AH}.

The reader may want to decompose tilting objects in $D^b(A)$ to tilting objects in $D^b(B)$ and $D^b(C)$ using functors $j^{\ast}$, $i^{\ast}$ and $i^!$. Unfortunately, this is not the case in general. For example, if we apply the functor $i^{\ast} = C \otimes _A^L -$ to the generalized APR tilting module $T$ we will get 0, since $T$ is generated by $e_BT$. It is also easy to see that $i^!$ does not preserve exceptional property. However, the functor $j^{\ast}$ preserves tilting modules.

\begin{lemma}
Let $\Lambda$ be a finite dimensional algebra and suppose that $D(\Lambda)$ has a recollement (3.3). Let $T \in D^b(\Lambda)$ be a tilting object. Then $j^{\ast} (T)$ is a tilting object if and only if for $n \neq 0, 1$, $\Hom_{D^b(\Lambda)} (T, i_{\ast}i^! (T) [n]) = 0$, and there is an exact sequence
\begin{align*}
& 0 \rightarrow \Hom_{D^b(\Lambda)} (T, i_{\ast}i^! (T)) \rightarrow \End_{D^b(\Lambda)} (T) \rightarrow \Hom_{D^b(\Lambda)} (T, j_{\ast}j^{\ast}(T))\\
& \rightarrow \Hom_{D^b(\Lambda)} (T, i_{\ast}i^! (T)[1]) \rightarrow 0.
\end{align*}
\end{lemma}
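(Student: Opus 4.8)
The plan is to check the three defining properties of a tilting object for $j^{\ast}(T)$ in turn; the first two are formal and the third carries the content. Compactness of $j^{\ast}(T)$ follows as in the proof of Lemma 4.1(1), using that $T$ is compact together with the adjunction $(j^{\ast},j_{\ast})$. For the generating condition, note that the left adjoint $j_!$ preserves compact objects (its right adjoint $j^{\ast}$ preserves coproducts), so $j_!(R)$ lies in $K^b(_{\Lambda}P)=\Tria(T)$; applying $j^{\ast}$ and using $j^{\ast}j_!\cong\mathrm{Id}$ gives $R\in\Tria(j^{\ast}(T))$, and since $j^{\ast}(T)$ is itself compact this forces $\Tria(j^{\ast}(T))=K^b(_RP)$. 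Hence the whole statement reduces to: $j^{\ast}(T)$ is exceptional if and only if the two displayed conditions hold.

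To handle exceptionality I would reformulate it inside $D^b(\Lambda)$. By the adjunction $(j^{\ast},j_{\ast})$ one has $\Hom_{D^b(R)}(j^{\ast}(T),j^{\ast}(T)[n])\cong\Hom_{D^b(\Lambda)}(T,j_{\ast}j^{\ast}(T)[n])$, so $j^{\ast}(T)$ is exceptional exactly when the right-hand side vanishes for all $n\neq 0$. Apply $\Hom_{D^b(\Lambda)}(T,-)$ to the recollement triangle $i_{\ast}i^!(T)\xrightarrow{u}T\to j_{\ast}j^{\ast}(T)\to i_{\ast}i^!(T)[1]$ of Definition 3.7 and feed in that $T$ is exceptional, i.e., that $\Hom_{D^b(\Lambda)}(T,T[n])$ equals $\End_{D^b(\Lambda)}(T)$ for $n=0$ and vanishes otherwise. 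The resulting long exact sequence decouples into two parts. For every $n\notin\{0,-1\}$ it gives an isomorphism $\Hom_{D^b(\Lambda)}(T,j_{\ast}j^{\ast}(T)[n])\cong\Hom_{D^b(\Lambda)}(T,i_{\ast}i^!(T)[n+1])$; reindexing by $m=n+1$, the totality of these vanishings is precisely the first displayed hypothesis. Around $n=0$ it gives the always-exact five-term sequence
\begin{equation*}
0\to\Hom(T,j_{\ast}j^{\ast}(T)[-1])\to\Hom(T,i_{\ast}i^!(T))\xrightarrow{\alpha}\End(T)\to\Hom(T,j_{\ast}j^{\ast}(T))\to\Hom(T,i_{\ast}i^!(T)[1])\to 0,
\end{equation*}
all $\Hom$ and $\End$ taken in $D^b(\Lambda)$ and $\alpha=\Hom(T,u)$, where one uses $\Hom_{D^b(\Lambda)}(T,T[\pm 1])=0$. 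This identifies $\Hom_{D^b(\Lambda)}(T,j_{\ast}j^{\ast}(T)[-1])$ with $\ker\alpha$, so the single remaining exceptionality condition --- vanishing in degree $-1$ --- is equivalent to injectivity of $\alpha$.

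Combining, $j^{\ast}(T)$ is exceptional, hence tilting, if and only if the first displayed hypothesis holds and $\alpha$ is injective. The implication from injectivity of $\alpha$ to exactness of the four-term sequence in the statement is immediate from the five-term sequence (the kernel term drops out), and the reverse implication is immediate once the maps of the four-term sequence are those induced by the triangle, and in any case follows by comparing alternating sums of dimensions with the five-term sequence (all spaces involved being finite dimensional). This gives both directions. I do not expect a genuine obstacle: the whole argument is a diagram chase through a single long exact sequence. The only point requiring care is the degree bookkeeping --- the shift $[n+1]$ on the $i_{\ast}i^!$-term against $[n]$ on the $j_{\ast}j^{\ast}$-term --- and checking that the hypothesis $\Hom_{D^b(\Lambda)}(T,i_{\ast}i^!(T)[n])=0$ for $n\neq 0,1$ supplies exactly the vanishings needed to pass between the long exact sequence and the four-term sequence, the sole leftover being the degree-$(-1)$ piece governed by $\alpha$.
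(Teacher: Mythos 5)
Your proposal is correct and follows essentially the same route as the paper: reduce to the exceptionality of $j^{\ast}(T)$, translate it into the vanishing of $\Hom_{D^b(\Lambda)}(T, j_{\ast}j^{\ast}(T)[n])$ for $n \neq 0$, and read off the two stated conditions from the long exact sequence obtained by applying $\Hom_{D^b(\Lambda)}(T,-)$ to the recollement triangle $i_{\ast}i^!(T) \to T \to j_{\ast}j^{\ast}(T) \to i_{\ast}i^!(T)[1]$, using that $T$ is exceptional. The only cosmetic differences are that the paper reaches the vanishing criterion via full faithfulness of $j_{\ast}$ together with a second long exact sequence (applying $\Hom(-, j_{\ast}j^{\ast}(T))$ to the triangle) where you invoke the $(j^{\ast}, j_{\ast})$ adjunction directly, and your generation argument via compactness of $j_!(R)$ is a slightly more careful version of the paper's observation that $R = j^{\ast}(\Lambda) \in \Tria(j^{\ast}(T))$.
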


\begin{proof}
We know that $j^{\ast} (T)$ is compact. Since $_{\Lambda} \Lambda \in \Tria (T)$, $_RR = j^{\ast} (\Lambda) \in \Tria (j^{\ast} (T))$. It suffices to show the exceptional property of $j^{\ast} (T)$. This is equivalent to the exceptional property of $j_{\ast} j^{\ast} (T)$.

Applying $\Hom _{D^b(\Lambda)} (-, j_{\ast}j^{\ast}(T))$ to
\begin{equation*}
i_{\ast} i^! (T) \rightarrow T \rightarrow j_{\ast} j^{\ast} (T) \rightarrow i_{\ast} i^!(T) [1]
\end{equation*}
and using $\Hom_{D^b(\Lambda)} (i_{\ast} i^! (T), j_{\ast} j^{\ast} (T)[n]) = 0$ for $n \in \mathbb{Z}$, we conclude
\begin{equation*}
\Hom _{D^b(\Lambda)} (T, j_{\ast}j^{\ast} (T) [n]) \cong \Hom _{D^b(\Lambda)} (j_{\ast}j^{\ast} (T), j_{\ast}j^{\ast} (T) [n])
\end{equation*}
for $n \in \mathbb{Z}$. Applying $\Hom _{D^b(\Lambda)} (T, -)$ to the same triangle, we conclude that $\Hom _{D^b(\Lambda)} (T, j_{\ast}j^{\ast} (T) [n]) = 0$ for $n \neq 0$ if and only if $\Hom_{D^b(\Lambda)} (T, i_{\ast}i^! (T) [n]) = 0$ for $n \neq 0, 1$, and there is an exact sequence as specified.
\end{proof}

\begin{proposition}
Let $A = (B, C, M)$ and let $T \in A\module$ be a tilting module with projective dimension at most 1. Then $j^{\ast} (T)$ is a tilting $B$-module with projective dimension at most 1.
\end{proposition}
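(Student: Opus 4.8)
The plan is to use the explicit description $j^{\ast} = R\Hom_A(Ae_B,-)$ from the recollement (4.1) together with the fact that $Ae_B$ is a projective $A$-module. Since $Ae_B$ is projective, $j^{\ast}$ is just the exact functor $e_B(-)\colon A\module \to B\module$, so $j^{\ast}(T) = e_BT$ is a genuine (finite-dimensional, hence finitely generated) $B$-module concentrated in degree $0$. Moreover $j^{\ast}$ sends projective $A$-modules to projective $B$-modules, because $e_B(Ae_B) = e_BAe_B = B$ and $e_B(Ae_C) = e_BAe_C = 0$. Applying the exact functor $j^{\ast}$ to a projective resolution $0 \to P_1 \to P_0 \to T \to 0$ --- which has length at most $1$ by hypothesis --- therefore yields a projective resolution $0 \to j^{\ast}(P_1) \to j^{\ast}(P_0) \to j^{\ast}(T) \to 0$ over $B$, and so $\pd_B j^{\ast}(T) \le 1$ is immediate.

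Next I would treat the generation condition $\Tria(j^{\ast}(T)) = K^b(_BP)$. The inclusion ``$\subseteq$'' is clear since $j^{\ast}(T)$ has finite projective dimension. For ``$\supseteq$'', note that $j^{\ast}$ is triangulated, hence maps $\Tria(T)$ into $\Tria(j^{\ast}(T))$; since $T$ is a tilting $A$-module we have $Ae_B \in K^b(_AP) = \Tria(T)$, so $B = j^{\ast}(Ae_B) \in \Tria(j^{\ast}(T))$, and therefore $K^b(_BP) = \Tria(B) \subseteq \Tria(j^{\ast}(T))$.

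It remains to verify the exceptional property $\Ext_B^n(j^{\ast}(T), j^{\ast}(T)) = 0$ for $n \ge 1$, and this is the step I expect to require the most care. For $n \ge 2$ it is automatic from $\pd_B j^{\ast}(T) \le 1$. For $n = 1$ I would argue as follows. Applying $\Hom_A(-,T)$ to $0 \to P_1 \xrightarrow{d} P_0 \to T \to 0$ and using $\Ext_A^1(T,T) = 0$ shows that $d^{\ast}\colon \Hom_A(P_0,T) \to \Hom_A(P_1,T)$ is surjective. Under the standard identifications $\Hom_A(Ae_B^{\,a}\oplus Ae_C^{\,b},T) \cong (e_BT)^a \oplus (e_CT)^b$ and $\Hom_B(j^{\ast}(Ae_B^{\,a}\oplus Ae_C^{\,b}), e_BT) \cong (e_BT)^a$, the map $f \mapsto j^{\ast}(f)$ is precisely the projection onto the $(e_BT)^a$-summand, hence surjective; and functoriality of $j^{\ast}$ makes the square relating $d^{\ast}$ with $(j^{\ast}d)^{\ast}$ commute. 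A two-out-of-three surjectivity argument then forces $(j^{\ast}d)^{\ast}\colon \Hom_B(j^{\ast}(P_0), j^{\ast}(T)) \to \Hom_B(j^{\ast}(P_1), j^{\ast}(T))$ to be surjective, so that $\Ext_B^1(j^{\ast}(T), j^{\ast}(T))$, being the cokernel of this map, vanishes.

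An alternative route to exceptionality is to apply Lemma 4.8 directly: since $\pd_A T \le 1$, all the $\Ext^{\ge 2}$ and $\Ext^{<0}$ terms occurring there vanish, the objects $i_{\ast}i^!(T)$ and $j_{\ast}j^{\ast}(T)$ are modules (because $_AC$ and $_B(e_BA)$ are projective, so $i^!$ and $j_{\ast}$ are exact), and the required four-term exact sequence is simply the relevant truncation of the long exact $\Hom_A(T,-)$-sequence attached to the triangle $i_{\ast}i^!(T) \to T \to j_{\ast}j^{\ast}(T) \to i_{\ast}i^!(T)[1]$, the surjectivity on the right again coming from $\Ext_A^1(T,T) = 0$. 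Either way, the only real work is the bookkeeping with the Hom-spaces in the exceptionality step; everything else is a formal consequence of $j^{\ast}$ being exact and preserving projectives.
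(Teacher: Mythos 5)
Your proposal is correct, and your primary argument is genuinely different from the paper's. The paper proves $\pd_B e_BT\leqslant 1$ and the generation condition exactly as you do (exactness of $j^{\ast}=e_B(-)$, preservation of projectives, applying $j^{\ast}$ to a $T$-resolution of $_AA$ — your $\Tria$-argument with $B=j^{\ast}(Ae_B)$ is an equivalent variant), but for the exceptionality it does not make your hands-on computation: it invokes the preceding lemma (Lemma 4.9 in the paper's numbering, not 4.8), applied to the triangle $i_{\ast}i^!(T)\to T\to j_{\ast}j^{\ast}(T)\to i_{\ast}i^!(T)[1]$, which here is the short exact sequence $0\to e_CT\to T\to e_BT\to 0$; the hypotheses of that lemma are checked precisely as in your ``alternative route,'' with $\pd_AT\leqslant 1$ killing the higher $\Ext$'s and $\Ext_A^1(T,T)=0$ giving the surjection onto $\Ext_A^1(T,e_CT)$. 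So your sketched alternative is essentially the paper's proof, while your main route — showing $(j^{\ast}d)^{\ast}$ is surjective by comparing $\Hom_A(P,T)\cong (e_BT)^a\oplus(e_CT)^b$ with $\Hom_B(e_BP,e_BT)\cong(e_BT)^a$ and using that $f\mapsto j^{\ast}(f)$ is the projection (this also works when $P$ is only a summand of $Ae_B^a\oplus Ae_C^b$, by naturality) — is a self-contained, purely module-theoretic verification of $\Ext_B^1(e_BT,e_BT)=0$ that avoids the derived-category lemma altogether. What the paper's route buys is brevity given Lemma 4.9 and the fact that the lemma applies to tilting complexes, not just modules of projective dimension one; what your direct route buys is independence from that lemma and from the recollement formalism. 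Both are valid.
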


\begin{proof}
Note that $j^{\ast} = \Hom_A (Ae_B, -)$ is exact and preserves projective modules. Thus applying $j^{\ast}$ to a minimal projective resolution of $T$ and a minimal $T$-resolution of $_AA$, we deduce that $\pd_B j^{\ast} (T) = \pd_B e_BT \leqslant \pd_A T \leqslant 1$, and $_BB$ has a $j^{\ast}(T)$-resolution. It remains to show $\Ext_B^1 (e_BT, e_BT) = 0$.

Since $\pd_A T \leqslant 1$ and $i_{\ast}i^! (T) = e_CT$, $\Hom_{D^b(A)} (T, i_{\ast}i^! (T) [n]) = 0$ for $n \neq 0, 1$. Moreover, applying $\Hom_A(T, -)$ to the short exact sequence $0 \rightarrow e_C T \rightarrow T \rightarrow e_BT \rightarrow 0$ one obtains the exact sequence
\begin{equation*}
0 \rightarrow \Hom_A (T, e_CT) \rightarrow \End_A (T) \rightarrow \Hom_A (T, e_BT) \rightarrow \Ext_A^1 (T, e_CT) \rightarrow 0.
\end{equation*}
Thus by the previous lemma, $e_BT$ is a tilting object in $D^b(B)$.
\end{proof}

\end{document}